\tikzstyle{edge}=[very thick]
\definecolor{bostonuniversityred}{rgb}{0.8, 0.0, 0.0}
\definecolor{arsenic}{rgb}{0.23, 0.27, 0.29}
\tikzstyle{diredge}=[postaction={decorate,decoration={markings,
\newcommand{\fitellipsis}[2] 
{\draw [fill=green]let \p1=(#1), \p2=(#2), \n1={atan2(\y2-\y1,\x2-\x1)}, \n2={veclen(\y2-\y1,\x2-\x1)}
    in ($ (\p1)!0.5!(\p2) $) ellipse [ x radius=\n2/2+0cm, y radius=0.1cm, rotate=\n1];
}
\theoremstyle{plain}
\newtheorem*{thm*}{Theorem}
\newtheorem{thm}{Theorem}
\Crefname{thm}{Theorem}{Theorems}
\newtheorem*{lem*}{Lemma}
\newtheorem{lem}[thm]{Lemma}
\Crefname{lem}{Lemma}{Lemmas}
\newtheorem*{claim*}{Claim}
\newtheorem{claim}{Claim}
\crefname{claim}{Claim}{Claims}
\Crefname{claim}{Claim}{Claims}
\newtheorem{prop}[thm]{Proposition}
\Crefname{prop}{Proposition}{Propositions}
\crefname{cor}{Corollary}{Corollaries}
\crefname{conj}{Conjecture}{Conjectures}
\Crefname{qn}{Question}{Questions}
\newtheorem{obs}[thm]{Observation}
\Crefname{obs}{Observation}{Observations}
\Crefname{ex}{Example}{Examples}
\theoremstyle{definition}
\Crefname{prob}{Problem}{Problems}
\Crefname{defn}{Definition}{Definitions}
\newtheorem*{defn*}{Definition}
\theoremstyle{remark}
\renewenvironment{proof}[1][]{\begin{trivlist}
\item[\hspace{\labelsep}{\bf\noindent Proof#1.\/}] }{\qed\end{trivlist}}
\def\expandafter\normalsize\expandafter{%
    \normalsize
    \setlength\abovedisplayskip{8pt}
    \setlength\belowdisplayskip{8pt}
    \setlength\abovedisplayshortskip{4pt}
    \setlength\belowdisplayshortskip{4pt}
}
 \setlist[itemize]{leftmargin=*}
\newcommand{\sun}{f}
\newcommand{\cA}{\mathcal{A}}
\newcommand{\HH}{\mathcal{H}}
\newcommand{\cF}{\mathcal{F}}
\renewcommand{\ex}{\mathrm{ex}}
\renewcommand{\S}[3]{\mathcal{S}^{(#1)}_{#2}(#3)}
\DeclareFontFamily{OT1}{pzc}{}
\DeclareFontShape{OT1}{pzc}{m}{it}{<-> s * [1.10] pzcmi7t}{}
\DeclareMathAlphabet{\mathpzc}{OT1}{pzc}{m}{it}
\title{\vspace{-0.8cm} Tur\'an numbers of sunflowers}
\author{
Domagoj Brada\v{c}\thanks{Department of Mathematics, ETH, Z\"urich, Switzerland. Email: \href{mailto:domagoj.bradac@math.ethz.ch} {\nolinkurl{domagoj.bradac@math.ethz.ch}}.}
\and
Matija Buci\'c\thanks{School of Mathematics, Institute for Advanced Study and Department of Mathematics, Princeton University, Princeton, USA. Email: \href{mailto:matija.bucic@ias.edu} {\nolinkurl{matija.bucic@ias.edu}}.}
 \and
Benny Sudakov\thanks{Department of Mathematics, ETH, Z\"urich, Switzerland. Email:
\href{mailto:benjamin.sudakov@math.ethz.ch} {\nolinkurl{benjamin.sudakov@math.ethz.ch}}.
Research supported in part by SNSF grant 200021\_196965.}
}
 \date{}
\begin{document}

\maketitle

\vspace{-0.5cm}
\begin{abstract}
    A collection of distinct sets is called a \emph{sunflower} if the intersection of any pair of sets equals the common intersection of all the sets. Sunflowers are fundamental objects in extremal set theory with relations and applications to many other areas of mathematics as well as theoretical computer science.  A central problem in the area due to Erd\H{o}s and Rado from 1960 asks for the minimum number of sets of size $r$ needed to guarantee the existence of a sunflower of a given size. Despite a lot of recent attention including a polymath project and some amazing breakthroughs, even the asymptotic answer remains unknown.
    
    We study a related problem first posed by Duke and Erd\H{o}s in 1977 which requires that in addition the intersection size of the desired sunflower be fixed. This question is perhaps even more natural from a graph theoretic perspective since it asks for the Tur\'an number of a hypergraph made by the sunflower consisting of $k$ edges, each of size $r$ and with common intersection of size $t$. For a fixed size of the sunflower $k$, the order of magnitude of the answer has been determined by Frankl and F\"{u}redi. In the 1980's, with certain applications in mind, 
    Chung, Erd\H{o}s and Graham and Chung and Erd\H{o}s considered what happens if one allows $k$, the size of the desired sunflower, to grow with the size of the ground set. In the three uniform case $r=3$ the correct dependence on the size of the sunflower has been determined by Duke and Erd\H{o}s and independently by Frankl and in the four uniform case by Buci\'{c}, Dragani\'{c}, Sudakov and Tran. We resolve this problem for any uniformity, by determining up to a constant factor the $n$-vertex Tur\'an number of a sunflower of arbitrary uniformity $r$, common intersection size $t$ and with the size of the sunflower $k$ allowed to grow with $n$.
\end{abstract} 


\section{Tur\'an numbers of sunflowers}
A family $A_1,\ldots,A_k$ of distinct sets is said to be a {\em sunflower} if there exists a {\em kernel} $C$ contained in each of the $A_i$ such that the {\em petals} $A_i\setminus C$ are disjoint. For $r,k\ge 1$, let $\sun_r(k)$ denote the smallest natural number with the property that any family of $\sun_r(k)$ sets of size $r$ contains an ($r$-uniform) sunflower with $k$ petals. The celebrated Erd\H{o}s-Rado theorem \cite{ER60} from 1960 asserts that $\sun_r(k)$ is finite; in fact Erd\H{o}s and Rado gave the following bounds:
\begin{equation}\label{eq:Erdos-Rado bound}
(k-1)^r \le \sun_r(k) \le (k-1)^r r!+1.
\end{equation}

They conjectured that for a fixed $k$ the upper bound can be improved to $\sun_r(k) \le O(k)^r$. Despite significant efforts, a solution to this conjecture remains elusive. A recent breakthrough in this direction was obtained by Alweiss, Lovett, Wu and Zhang \cite{ALWZ19}. The ideas they developed to tackle this problem have found quite wide applications not only within combinatorics but also in theoretical computer science and probability theory. Their methods have since been pushed a bit further by Rao \cite{Rao20} (Tao \cite{tao} found a different proof of this result using Shannon entropy) and Bell, Chueluecha and Warnke \cite{bell21}, giving the current record of $\sun_r(k) \le O( k \log r )^r$.

About 45 years ago, Duke and Erd\H{o}s \cite{DE77} initiated the systematic investigation of a closely related problem, where one in addition fixes the size of the kernel of the sunflower. In particular, we seek the $r$-uniform sunflower with $k$ petals and kernel of size $t$, which we will denote by $\S{r}{t}{k}$. Duke and Erd\H{o}s asked for the minimum number of sets of size $r$ which guarantees us to be able to find $\S{r}{t}{k}$. 
In other words, they ask for the Tur\'an number\footnote{The Tur\'an number $\ex(n,\HH)$ of an $r$-graph $\HH$ is the maximum number of edges in an $r$-graph on $n$ vertices which does not contain a copy of $\cF$ as a subhypergraph.} of the sunflower $\S{r}{t}{k}$.
Over the years this problem has been reiterated several times \cite{Furedi-survey,chung1997open} including in a recent collaborative ``polymath'' project \cite{Polymath}. 
The case $k=2$ of the problem (a certain fixed intersection size is forbidden) is better known as the restricted intersection problem and has received
considerable attention over the years \cite{FF85, FR87, FW81, KL17, KL20, Sos73}, partly due to its huge impact in discrete geometry \cite{FR90}, communication complexity \cite{Sgall99} and quantum computing \cite{BCW98}.
Another case that has a rich history \cite{Erdos65, EG61, EKR61, Frankl13, Frankl17, Frankl17-newrange, FK18} is $t=0$ (a matching of size $k$ is forbidden); the optimal construction in this case is predicted by the famous Erd\H{o}s Matching Conjecture. 

For fixed $r,t$ and $k$ with $1 \le t \le r-1$ Frankl and F{\"u}redi \cite{FF87} give a conjecture for the correct value of $\ex(n,\S{r}{t}{k})$ up to lower order terms, based on two natural candidates for near-optimal $\S{r}{t}{k}$-free $r$-graphs. They verify their conjecture for $r \ge 2t+3$, but otherwise, with the exception of a few particular small cases, it remains open in general. In terms of asymptotic results, Frankl and F{\"u}redi \cite{FF85} showed in 1985 that  $\ex(n,\S{r}{t}{k})\approx_{r,k} n^{\max\{r-t-1,t\}}$.\footnote{We write $\approx$ here instead of the more commonplace $\Theta$ for clarity. For multi-valued functions we index the $\approx$ with the terms which are considered to be constants.} 

While the above result of Frankl and F{\"u}redi tells us the correct order of magnitude of the Tur\'an number for any sunflower of \emph{fixed} size, as soon as we are interested in finding ``larger'' sunflowers, namely of size even very moderately growing with $n$, their bounds deteriorate drastically. For example, in the graph case ($r=2$) there is an immediate, more precise result, namely $\ex(n,\S{2}{1}{k})\approx nk$ and being able to find such stars, of size growing with $n$, is often very useful in a variety of situations. In general, the problem of obtaining the correct dependence of the Tur\'an number not only on $n$, the size of our underlying graph, but also on $k$, the size of our sunflower, for hypergraphs of higher uniformity turns out to be more challenging. 
The $3$-uniform case is already interesting: Duke and Erd\H{o}s \cite{DE77} and Frankl \cite{Frankl78} showed $\ex(n,\S{3}{1}{k}) \approx nk^2$ and $\ex(n,\S{3}{2}{k})\approx n^2k$, while Chung \cite{chung1983unavoidableS} even managed to determine the answer up to lower order terms. In the 1980's Chung, Erd\H{o}s and Graham \cite{CEG} and Chung and Erd\H{o}s \cite{chung1987unavoidable}, motivated by potential applications to the problem of finding optimal so-called unavoidable graphs and hypergraphs, consider what happens with Tur\'an numbers of such large sunflowers with uniformity higher than $3$. In the four uniform case this was only recently resolved in \cite{unavoidability}. In this paper, we resolve this problem for any uniformity by determining the correct order of magnitude of the Tur\'an number of sunflowers of arbitrary uniformity, in terms of both the size of the underlying graph and the size of the sunflower.

\begin{restatable}{thm}{thmmain}\label{thm:main}
$$
 \ex(n,\S{r}{t}{k})\approx_{r} 
\begin{cases}
n^{r-t-1}k^{t+1} & \text{ if } t \le \frac{r-1}{2}, \\
n^{t}k^{r-t} & \text{ if } t > \frac{r-1}{2}.
\end{cases}
$$
\end{restatable}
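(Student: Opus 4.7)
For the lower bound I plan two explicit constructions, both of which are $\S{r}{t}{k}$-free. In case 1, fix $S \subset [n]$ with $|S| = k-1$ and let $H_1$ consist of every $r$-set meeting $S$ in at least $t+1$ vertices; then $|H_1| = \Theta_r(n^{r-t-1}k^{t+1})$, and if $A_1,\dots,A_k$ were a sunflower with kernel $K$ of size $t$ inside $H_1$, each bound $|A_i\cap S|\ge t+1 > |K|$ would force the petal $A_i\setminus K$ to meet $S$, producing $k$ disjoint elements of $S$ against $|S|=k-1$. Case 2 needs a more delicate construction, and I expect an iterative/product construction does the job: partition $[n]$ into a small part $W$ of size $\Theta(k)$ and a large part $U$ of size $\Theta(n)$, and take the edges to consist of $r-t$ vertices of $W$ (chosen from a suitably sunflower-free $(r-t)$-graph on $W$, with its own sunflower conditions solved recursively) glued to $t$ vertices of $U$, with enough care on the $W$-side to preclude the ``kernel entirely in $W$'' configurations that are the most dangerous. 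The target $\Theta_r(n^t k^{r-t})$ then matches the number of edges.

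For the upper bound in case 2, I would induct on $r$, reducing $(r,t)\to (r-1,t-1)$. Given $|H| > C_r n^t k^{r-t}$, the vertex $v$ of maximum degree satisfies $d(v)\ge r|H|/n > rC_r n^{t-1}k^{r-t}$, so its link $L_v$ is an $(r-1)$-graph with this many edges. A quick check shows that $(r-1,t-1)$ is still in case 2, with the sole exception of the boundary $r$ even, $t=r/2$, where $(r-1,t-1)$ falls into case 1 but the two formulas coincide numerically. In either case the inductive hypothesis places $\S{r-1}{t-1}{k}$ inside $L_v$, and adding $v$ to each of its edges produces $\S{r}{t}{k}$ in $H$. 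The base cases $t=0$ (Erd\H{o}s's matching theorem) and $r=t+1$ (trivial averaging over $(r-1)$-sets) are both classical.

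The main obstacle is the upper bound in case 1. The naive vertex-degree induction from case 2 only gives $|H|\le Cn^{r-t}k^t$, which is weaker than the target $n^{r-t-1}k^{t+1}$ by a factor of $n/k\ge 1$; similarly, bounding each $t$-set's link by Erd\H{o}s's matching theorem yields only $|H|\le Cn^{r-1}k$, again too weak. Crucially, one cannot simply induct on $t$ here, because a $\S{r}{t+1}{k}$ found inside some $(t+1)$-set's link is \emph{not} a $\S{r}{t}{k}$ in $H$. My plan is to iterate the link argument more carefully: for a $t$-set $T$ whose $(r-t)$-uniform link $L_T$ is large but has no $k$-matching, the structure theorem for matching-free hypergraphs forces a small cover $C_T$ of size $O(k)$, after which I would pass to the $(t+1)$-sets $T\cup\{v\}$ with $v\in C_T$ and double-count, so that the global tally simultaneously exploits the per-$t$-set structural bound (a cover, not merely an edge count) and an averaging over $(t+1)$-sets. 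Designing this hybrid counting scheme so that the cover-based saving on each $t$-set is actually combined, rather than just summed, across the $\binom{n}{t}$ different kernels is the crux of the argument and is what converts the weak global bound $n^{r-1}k$ into the sharp $n^{r-t-1}k^{t+1}$.
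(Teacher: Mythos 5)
Your Case 1 lower bound is correct and is essentially the paper's construction. Your Case 2 lower bound, however, has a fatal flaw, and the difficulty is not where you place it: in the product construction (each edge $= r-t$ vertices of $W$ plus $t$ of $U$), the dangerous sunflower is not one with kernel inside $W$ --- that is impossible when $t>r-t$ --- but one whose kernel is a full $(r-t)$-vertex $W$-edge together with $2t-r$ vertices of $U$. Then every petal has size $r-t$ and lies \emph{entirely in $U$}, where there is no constraint at all, so the sunflower exists as soon as $|U| \gtrsim rk$, regardless of how the $W$-side hypergraph is designed. The paper abandons product structure entirely and instead builds the hypergraph in $\Theta((n/k)^t)$ random stages, picking at stage $i$ a random set $V_i$ of $(r-t)k+t-1$ vertices and keeping only those $r$-subsets of $V_i$ that meet every earlier $V_j$ in fewer than $t$ vertices; that condition confines any potential $\S{r}{t}{k}$ to a single $V_i$, where it cannot fit by a vertex count.

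Your Case 2 upper bound (max-degree vertex and link) is the paper's reduction and is correct as stated, but the descent $(r,t)\to(r-1,t-1)$ bottoms out at the \emph{balanced} case $r'=2t'+1$ (after $2t-r+1$ steps), not at $t=0$ or $r=t+1$; the balanced case sits on the Case~1 side and is where all the hard work lives, so your induction does not close. For Case~1 you correctly diagnose the weakness of the naive bounds and that one must use the $O(k)$-vertex covers of the $t$-set links rather than merely their edge counts, but the actual counting scheme --- the mathematical heart of the paper --- is absent. Briefly, for $r=2t+1$ the paper assigns to each edge $e$ a ``type'' $f_e:\binom{[2t+1]}{t}\to[2t+1]$ recording, for each $t$-subset $I$ of coordinates, a coordinate $j\notin I$ with $e_j$ in the cover of the link $L_{e_I}$; for a fixed type, edges are enumerated by choosing $t$ vertices freely and then iteratively picking each next vertex from a size-$O(k)$ cover as the type dictates, giving $O(n^t k^{t+1})$ edges per type. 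When the type never lets this process reach all $2t+1$ coordinates, one extracts a ``$(t+1,t)$-system'' on $[2t+1]$ (an intersection-closed family covering all $t$-sets and avoiding sizes $\equiv 2t+1\pmod{t+1}$), whose nonexistence on $2t+1$ points is proved via a 1979 theorem of Frankl and Katona. The remaining case $r\ge 2t+2$ is then reduced to the balanced one and to an auxiliary lemma (bounding hypergraphs that avoid $\S{r}{s}{k}$ for a whole range of $s$) via the device of ``$rk$-extending sets.'' These ideas have no counterpart in your proposal, so the upper bound is incomplete precisely at its crux.
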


The key special case of the above result is when $t=\frac{r-1}{2}$, which we refer to as the \emph{balanced case}, since the petal and kernel of the \emph{balanced sunflower} $\S{2t+1}{t}{k}$ we seek are roughly of the same size. The importance of the balanced case is due to a relatively simple reduction of any other case to the balanced one. Furthermore, it turns out that it is connected to a number of interesting topics ranging from extremal set theory to optimisation theory and theoretical computer science.

The heart of our proof for the upper bound of the balanced case $r=2t+1$ is a reduction to the extremal problem asking for the existence of the following object.
\begin{defn*}
A set family $\mathcal{A} \subseteq \mathcal{P}([N])$ is said to be a $(t+1,t)$-system if: 
\begin{itemize}
    \item $\mathcal{A}$ is intersection closed, i.e. $\forall A,B \in \mathcal{A}$ we also have $A \cap B \in \mathcal{A}$,
    \item any subset of $[N]$ of size $t$ is contained in some set in $\mathcal{A}$ and
    \item for any $A \in \mathcal{A}$ we have $|A| \not\equiv N \pmod {t+1}$.
\end{itemize}
\end{defn*}

The specific case of interest for us is when $N=2t+1$, and in particular our reduction shows that provided such a system does not exist, \Cref{thm:main} holds in the balanced case $t=\frac{r-1}{2}$. 

In general the existence of $(t+1,t)$-systems is related to a number of interesting topics. For example, it played an important role in a recent work on submodular optimisation \cite{submodular}, where using an algebraic argument it was shown $(t+1,t)$-systems do not exist when $t+1$ is a prime power. Through our reduction this implies \Cref{thm:main} when $t+1$ is a prime power. On the other hand, a follow-up work focusing on boolean constraint satisfaction problems \cite{csp} shows that in fact $(t+1,t)$ systems do exist when $t+1$ has two distinct prime divisors. Despite this, and due to the fact that these examples require a relatively large ground set, it turns out that in our case the $(t+1,t)$-systems on ground set of size $N=2t+1$ do not exist for any $t$. Our proof of this statement for all $t$ takes a slightly different approach, it follows closely an argument of Frankl and F\"uredi in \cite{FF85} and relies on the following beautiful extremal set theoretic result of Frankl and Katona from 1979, which was originally inspired by a certain database design problem. They showed that among any collection of $t+1$ subsets of $[t]$ there are $a+1$ of them with intersection of size exactly $a$. 

\textbf{Notation.} Throughout the paper the uniformity of our hypergraphs is treated as a fixed constant in all our use of the asymptotic notation and we always assume that $k,$ the size of the desired sunflower, is at least $2.$

\section{Balanced sunflowers}
In this section we prove the upper bound of \Cref{thm:main} holds in the balanced case. 
\begin{thm}\label{thm:balanced} 
$$
 \ex(n,\S{2t+1}{t}{k})\le O(n^{t}k^{t+1}).
$$
\end{thm}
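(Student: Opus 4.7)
The plan is to prove the theorem by contrapositive: starting from a hypothetical $(2t+1)$-uniform hypergraph $H$ on $n$ vertices with more than $Cn^t k^{t+1}$ edges and no copy of $\S{2t+1}{t}{k}$, I would construct a $(t+1,t)$-system on a $(2t+1)$-element ground set, contradicting the non-existence of such systems discussed in the preceding text. Throughout, assume such an $H$ exists and work towards a contradiction.

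First I would perform a standard regularisation to arrange that every $t$-subset $T$ contained in some edge of a large surviving subhypergraph satisfies $|L(T)| \gtrsim k^{t+1}$, while retaining a constant fraction of the edges. For each such $T$, the $(t+1)$-uniform link $L(T)$ must have matching number strictly less than $k$, since otherwise the $k$ matched petals together with $T$ as kernel would form the forbidden sunflower. Combined with the density lower bound on $L(T)$, an Erd\H{o}s-matching-type inequality then forces $L(T)$ to be concentrated: there is a set of $O(k)$ vertices meeting every edge of $L(T)$.

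Next I would choose a ``rich'' edge $E \in H$, meaning every $t$-subset of $E$ has the above density (existence via a weighted counting from the previous step), and, identifying $E$ with $[2t+1]$, build a family $\mathcal{A} \subseteq 2^E$ from intersection patterns $F \cap E$ for suitable edges $F \neq E$, closed under pairwise intersection. By construction $\mathcal{A}$ is intersection-closed, and the richness of $E$ forces every $t$-subset of $E$ to lie in some member of $\mathcal{A}$. The remaining axiom of a $(t+1,t)$-system requires $|A| \not\equiv 2t+1 \pmod{t+1}$ for every $A \in \mathcal{A}$, i.e.\ it forbids sizes $t$ and $2t+1$: size $2t+1$ is avoided by keeping $E$ itself out of $\mathcal{A}$, while size $t$ is where the no-sunflower hypothesis is invoked, since whenever two members of $\mathcal{A}$ intersect in a $t$-set $T$, combining the edges witnessing them with the concentration structure of $L(T)$ should let us extract $k$ pairwise disjoint petals above $T$.

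The main obstacle I anticipate is this last arithmetic axiom: ruling out size $t$ not just among the generators $F \cap E$ but throughout the intersection closure of $\mathcal{A}$ requires a delicate combination of the covering structure across different links and a careful choice of which intersection patterns enter $\mathcal{A}$ to begin with. Once all three axioms are verified, $\mathcal{A}$ becomes a $(t+1,t)$-system on $[2t+1]$, and its promised non-existence yields the desired contradiction.
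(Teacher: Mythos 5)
You correctly identify the target intermediate object — a $(t+1,t)$-system on a ground set of size $2t+1$ — and the overall strategy of reducing to its non-existence, which is indeed what the paper does. However, the specific construction you propose and the key step you flag as an "obstacle" are both genuinely problematic, not just technically delicate. First, the sketch you give for ruling out size-$t$ members of $\cA$ is logically backwards: if $A, B \in \cA$ intersect in a $t$-set $T$, you say the cover structure of $L(T)$ "should let us extract $k$ pairwise disjoint petals above $T$" — but a $k$-matching in $L(T)$ is exactly a copy of $\S{2t+1}{t}{k}$ with kernel $T$, which the hypothesis forbids; the $O(k)$-vertex cover exists precisely because no such matching exists, so the cover is an obstruction to finding disjoint petals, not a tool for producing them. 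There is no apparent way to turn the appearance of a $t$-set in the intersection closure of your family $\{F\cap E\}$ into a sunflower. Second, your construction lives on the \emph{vertices} of a single rich edge $E$, whereas the paper's $(t+1,t)$-system lives on the abstract index set $[2t+1]$: each edge $e$ is given a ``type'' $f_e:\binom{[2t+1]}{t}\to[2t+1]$ recording, for every $t$-subset $I$ of coordinates, some coordinate $j\notin I$ with $u_j\in\Phi(e_I)$ (the cover of the corresponding link); edges of each fixed type $f$ are then counted by iteratively growing a set of coordinates via $J\mapsto J\cup\{f(I)\}$, and it is the failure of this greedy growth to reach $[2t+1]$ — for \emph{every} starting $t$-set — that produces the $(t+1,t)$-system $\cA=\{C\subsetneq[2t+1]:\ f(I)\in C\ \forall I\in\binom{C}{t}\}$. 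In this construction all three axioms fall out automatically (intersection-closure from the conjunction of the defining conditions, coverage from $C(J)\in\cA$, and the size condition because $[2t+1]\notin\cA$ and no $t$-set $J$ satisfies $f(J)\in J$); there is no delicate arithmetic argument left to do, it is wholly absorbed into the counting-vs.-obstruction dichotomy. Your proposal, by contrast, leaves that arithmetic constraint open, and the route you sketch for it cannot close it.
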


The proof is split into two parts corresponding to the following two subsections. In the first one we prove a key reduction to the existence problem of the $(t+1,t)$-systems defined in the introduction. In the subsequent one we deal with the existence of $(t+1,t)$-systems.

\subsection{Reduction to the existence of \texorpdfstring{$(t+1,t)$}{(t+1,t)}-systems}
The aim of this subsection is to prove the following key lemma.

\begin{lem} \label{lem:reduction-to-set-system}
    If there exists no $(t+1, t)$-system with $N = 2t+1,$ then $\ex(n,\S{2t+1}{t}{k})\le O(n^{t}k^{t+1}).$
\end{lem}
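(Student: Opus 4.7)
I would argue by contradiction: assume $H$ is a $(2t+1)$-uniform hypergraph on $n$ vertices with $e(H) \ge C_r \cdot n^t k^{t+1}$ (for a large constant $C_r$), that $H$ is $\S{2t+1}{t}{k}$-free, and that no $(t+1,t)$-system on $N=2t+1$ exists, and derive a contradiction. The overall plan is to locate a single ``model'' edge $e_0$ of $H$ and, using the rich extension structure around it, build an intersection-closed family $\cA \subseteq \cP(e_0)$ that has all the defining features of a $(t+1,t)$-system on the ground set $e_0 \cong [2t+1]$. Note that here the forbidden size condition $|A| \not\equiv N \pmod{t+1}$ specialises to $|A| \notin \{t, 2t+1\}$, so only two sizes must be avoided.

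\textbf{Cleaning phase.} I would first run a cleaning pass through the $s$-levels $s = 0, 1, \ldots, 2t$: repeatedly delete any edge $e$ which contains an $s$-subset $S$ whose link $L(S)$ (inside the current surviving subhypergraph) is too small, where the quantitative thresholds are chosen so that level $s$ roughly corresponds to having $\Omega(n^{\max(t-s,0)} k^{t+1-\max(s-t,0)})$ disjoint extensions of $S$. A routine double-count shows that with the thresholds tuned correctly, strictly less than $e(H)/2$ edges are removed over all levels, leaving a non-empty cleaned hypergraph $H'$ in which \emph{every} subset of every edge has the advertised abundance of disjoint extensions. Fix any $e_0 \in H'$ and identify $e_0$ with $[2t+1]$.

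\textbf{Building the set system.} For $A \subseteq e_0$, I would declare $A \in \cA$ iff there exist $k$ edges of $H'$ forming a sunflower with kernel exactly $A$ whose petals are pairwise disjoint \emph{and} disjoint from $e_0 \setminus A$. The three axioms of a $(t+1,t)$-system must then be verified. First, $|A|=t$ is ruled out since this would be a forbidden sunflower, and $|A|=2t+1$ is impossible since it would require $k$ distinct copies of $e_0$. Second, every $t$-set $T\subset e_0$ lies in some $A\in\cA$: the cleaning at level $t$ gives $T$ many disjoint extensions in $L(T)$, and since none of them can be arranged into a matching of size $k$ (else $T\in\cA$, contradicting the first axiom), an Erd\H{o}s--Gallai/matching-extremal argument on the $(t+1)$-uniform hypergraph $L(T)$ forces large common codegree on a subset, which upgrades a $t+1, t+2, \ldots$ intersection with $e_0$ into a genuine sunflower based on some $A \supsetneq T$ with $A \subseteq e_0$. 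Third, intersection-closure: given sunflowers witnessing $A, B \in \cA$, the pairwise disjointness of petals forces the intersection of any edge from the $A$-sunflower with any edge from the $B$-sunflower to lie in $e_0$, and a further pigeonhole/matching argument using the cleaning at level $|A \cap B|$ extracts a sub-sunflower of size $k$ with kernel exactly $A \cap B$, so $A \cap B \in \cA$.

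The existence of $\cA$ contradicts our standing assumption that no $(t+1,t)$-system on $[2t+1]$ exists, finishing the proof. \textbf{The main obstacle} is verifying intersection-closure with the right quantitative slack: the naive ``large codegree'' definition of $\cA$ is not intersection-closed, and the sunflower-based definition above only merges well if the cleaning leaves enough extensions at \emph{every} intermediate size $|A \cap B|$ simultaneously. Getting the constants $c_s$ across the $2t+1$ cleaning levels to close up after the merge step is the quantitative heart of the argument and essentially dictates the shape of the cleaning phase.
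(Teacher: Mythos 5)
Your construction of $\cA$ is genuinely different from the paper's, and I believe it has two gaps that the sketch does not resolve. The paper never fixes a single ``model'' edge and builds a set system on its vertices. Instead it works on the abstract index set $[2t+1]$: each edge $e$, ordered as a tuple, is assigned a \emph{type} $f_e\colon\binom{[2t+1]}{t}\to[2t+1]$, where $f_e(I)$ records a coordinate $j\notin I$ with $u_j\in\Phi(e_I)$, $\Phi$ being a fixed small cover of each link. Since there are $O(1)$ types, one bounds $|E_f|$ separately for each fixed $f$. Given $f$, one runs a deterministic ``extending sequence'' on $t$-subsets of $[2t+1]$; if some sequence reaches $[2t+1]$ it yields an $O(n^t k^{t+1})$ enumeration of $E_f$, and if every sequence gets stuck, the stuck sets form the $(t+1,t)$-system, on the \emph{index set}, purely from the function $f$. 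There is no cleaning and no sunflower-merging.

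The first gap in your argument is the covering axiom. You want some $A$ with $T\subsetneq A\subseteq e_0$ and a $k$-petal sunflower with kernel exactly $A$. The matching-extremal argument on $L(T)$ gives a small vertex cover and hence a vertex (or small set) of high codegree with $T$, but that vertex lies somewhere in $V(H')$, not necessarily in $e_0$. You do know the cover meets $e_0\setminus T$ (since $e_0\setminus T$ is an edge of $L(T)$), but the vertex of $e_0$ that is hit may well have tiny codegree with $T$. Nothing in the cleaning forces the heavy-codegree extensions of $T$ to lie inside $e_0$, and I don't see how your iteration (``upgrades a $t+1, t+2, \ldots$ intersection with $e_0$'') produces a set $A\subseteq e_0$. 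The paper avoids this entirely because the cover-vertex guaranteed to lie inside $e$ is exactly what defines $f_e$, so there is no analogous ``escape from $e_0$.''

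The second gap is intersection-closure, which you flag yourself. If $\mathcal{S}_A$, $\mathcal{S}_B$ are sunflowers witnessing $A,B\in\cA$, there is no obvious way to manufacture a sunflower with kernel exactly $A\cap B$. The edges of $\mathcal{S}_A$ all contain $A\supseteq A\cap B$, so they pairwise meet in $A\setminus B$ outside the target kernel; discarding that overlap and re-matching inside $L(A\cap B)$ is exactly the hard problem, and there is no constraint preventing $L(A\cap B)$ from having all its large matchings go outside $e_0$ or fail to have kernel exactly $A\cap B$. Your sketch says a ``pigeonhole/matching argument using the cleaning at level $|A\cap B|$'' extracts it, but this is where the approach needs a new idea rather than more careful bookkeeping. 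In short: the paper's proof constructs $\cA$ combinatorially from a single function $f$ on coordinates, for which intersection-closure is a one-line verification ($f(I)\in A$ and $f(I)\in B$ imply $f(I)\in A\cap B$); your $\cA$ is defined by an existential condition on the hypergraph, for which intersection-closure and covering are both real theorems that your proposal does not prove.
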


Before turning to its proof let us define a convenient piece of notation which we will use throughout the paper. Given an $r$-uniform hypergraph $H$ and a subset $S$ consisting of $m$ of its vertices, we define the link graph $L_S$ of $S$ in $H$ to be the $(r-m)$-uniform hypergraph consisting of all $r-m$ sets which together with $S$ make an edge of $H$.

\begin{obs} \label{obs:cover}
Let $H$ be an $r$-uniform hypergraph without $\S{r}{t}{k}$. Given any set $S$ of $t$ vertices in $H$, its link graph $L_S$ in $H$ can be covered using at most $(k-1)(r-t)$ vertices.
\end{obs}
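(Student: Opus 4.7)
The plan is to proceed by contraposition: assuming the link graph $L_S$ cannot be covered by $(k-1)(r-t)$ vertices, we will construct a copy of $\S{r}{t}{k}$ in $H$, contradicting the hypothesis.

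First, I would observe that a vertex cover of $L_S$ is exactly a set of vertices hitting every $(r-t)$-edge of $L_S$. The standard greedy observation is that if $L_S$ has no vertex cover of size $c$, then it contains a matching of size larger than $c/(r-t)$, because given any maximal matching $M$ in $L_S$, the vertex set $V(M)$ is automatically a cover (maximality forces every edge to meet $V(M)$), and $|V(M)| = (r-t)|M|$. Applying this with $c = (k-1)(r-t)$ shows that if no cover of size $(k-1)(r-t)$ exists, then $L_S$ contains a matching of at least $k$ edges, say $T_1, \ldots, T_k$, which are pairwise disjoint $(r-t)$-subsets of $V(H) \setminus S$.

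Second, I would translate this back to $H$. By definition of the link graph, each $A_i := S \cup T_i$ is an edge of $H$, and since the petals $T_i$ are pairwise disjoint while $S \subseteq A_i$ for each $i$, we have $A_i \cap A_j = S$ for all $i \neq j$. Thus $A_1, \ldots, A_k$ is a sunflower with common kernel $S$ of size exactly $t$ and $k$ pairwise disjoint petals, i.e., a copy of $\S{r}{t}{k}$ inside $H$, contradicting the assumption that $H$ is $\S{r}{t}{k}$-free.

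There is no genuine obstacle here; the only subtlety is the elementary fact that a maximum matching in an $(r-t)$-uniform hypergraph provides a vertex cover whose size is $(r-t)$ times the matching number, which converts the covering-number hypothesis into a matching and hence into the desired sunflower. The statement $k \ge 2$ mentioned in the notation paragraph is needed to ensure that the sunflower we produce is nontrivial.
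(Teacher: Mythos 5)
Your proof is correct and uses essentially the same idea as the paper: both rely on the observation that the vertex set of a maximal matching in $L_S$ is a vertex cover of size $(r-t)$ times the matching number. You phrase it contrapositively (no small cover $\Rightarrow$ large matching $\Rightarrow$ sunflower) while the paper argues directly (no $k$-matching since $H$ is $\S{r}{t}{k}$-free, so the maximal matching's vertices give the cover), but the content is identical.
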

\begin{proof}
    $L_S$ is an $(r-t)$-uniform hypergraph and by assumption that $H$ is $\S{r}{t}{k}$-free, we know $L_S$ has no set of $k$ pairwise vertex disjoint edges. Taking a maximal collection of pairwise disjoint edges in $L_S$ we know it contains at most $(k-1)(r-t)$ vertices and we know that every edge of $L_S$ must contain one of them or we could extend the collection, contradicting maximality. Hence, these vertices provide us with the desired cover.
\end{proof}

Before presenting the formal proof of \Cref{lem:reduction-to-set-system}, we try to give some intuition behind the key ideas. Let $H$ be a $(2t+1)$-uniform hypergraph with no copy of $\S{2t+1}{t}{k}.$ Our goal is to show that $H$ has at most $O(n^t k^{t+1})$ edges.
For each $t$-vertex set $S$ in $H$, we fix a cover $\Phi(S)$ of $L_S$ with $|\Phi(S)| = O(k),$ provided by \Cref{obs:cover}. Suppose we wish to enumerate all the edges of $H.$ One way to do this is to choose $v_1, \dots, v_t$ to be  any $t$ distinct vertices, choose $v_{t+1}$ to be a vertex in $\Phi(\{v_1, \dots, v_t\})$, and choose the remaining vertices $v_{t+2}, \dots, v_{2t+1}$ arbitrarily. As any edge containing $v_1, \dots, v_t$ contains a vertex from $\Phi(\{v_1, \dots, v_t\}),$ we have enumerated all edges. Thus we obtain $|E(H)| = O(n^{2t} k),$ an improvement over the trivial bound $O(n^{2t+1}).$

How can we take this argument further? Suppose we have chosen the vertices $v_1, \dots, v_{t+1}$ as above. If there is an index $i \in [t+1]$ such that for $S = \{v_1, \dots, v_{t+1}\} \setminus \{v_i\},$ the vertex $v_i$ is not in $\Phi(S),$ then we can choose the next vertex $v_{t+2}$ from $\Phi(S)$ and proceed as before. In general, in each step we want to find a set $S$ of $t$ already chosen vertices such that $\Phi(S)$ contains none of the already chosen vertices. If we are always able to find such an $S$ we obtain our desired bound of $O(n^tk^{t+1})$.

More precisely, for each edge $e$ in $H$, we fix some ordering of its vertices and denote $e = (u_1, u_2, \dots, u_{2t+1})$. For an edge $e$ and a set $I \subseteq [2t+1],$ we will write $e_I$ for the subset of the edge consisting of vertices indexed by $I$, in other words $e_I = \{ u_i \, \vert \, i \in I \}.$ Looking at any set of indices $I \in \binom{[2t+1]}{t},$ we know there exists an index $j \in [2t+1] \setminus I$ such that $u_j \in \Phi(e_I).$ This lets us define a function $f_e$ which assigns to each $t$-element subset $I$ of the coordinates a new coordinate $j$, so outside of $I$, such that $u_j$ belongs to the cover $\Phi(e_I)$ of the link graph defined by the vertices of $e$ indexed by $I$. One should think of $f_e$ as the recipe for how to perform our counting strategy above. Since there are only $O(1)$ such possible functions $f_e,$ it is enough to show that for any \emph{fixed} such function $f,$ we have at most $O(n^tk^{t+1})$ edges $e$ such that $f_e=f$. To do this we implement our strategy as described above and either find there can be at most $O(n^tk^{t+1})$ edges corresponding to this recipe or we ``get stuck'' which will precisely correspond to the existence of a $(t+1, t)$-system with $N=2t+1$, which we later show is impossible.

We make these arguments precise in the following proof.
\begin{proof}[ of \Cref{lem:reduction-to-set-system}]
    Let us assume there is no $(t+1, t)$-system with $N = 2t+1.$ Let $H$ be a $(2t+1)$-uniform hypergraph with no copy of $\S{2t+1}{t}{k}.$ Our goal is to show that $H$ has $O(n^t k^{t+1})$ edges.
    
    Let $\cF$ be the family of all functions $f\colon \binom{[2t+1]}{t} \rightarrow [2t+1]$ such that $f(I) \not\in I$ for all $I \in \binom{[2t+1]}{t}.$ We will assign to each edge of our hypergraph a type corresponding to a function $f \in \cF$. In particular, if we denote by $E_f$ the set of edges of type $f$ and for all $f \in \cF$ we show that $|E_f| = O\left(n^tk^{t+1}\right)$, we are done, since $|\cF| = (t+1)^{\binom{2t+1}{t}} = O(1).$

    For any set $S$ of $t$ vertices, fix an arbitrary cover $\Phi(S)$ of $L_S$ of size at most $(k-1)(t+1),$ given by \Cref{obs:cover}.
    
    Let $e$ be any edge and let us fix an arbitrary ordering of its vertices. We view $e$ as a $(2t+1)$-tuple $(u_1, \dots, u_{2t+1}).$ We define a function $f_e \in \cF$, which will be the type of $e$, as follows. Consider any set $I \in \binom{[2t+1]}{t}$ and let $e_I:=\{u_i \: \mid \: i \in I\}$ be the subset of vertices of $e$ with indices in $I$. By definition of $\Phi$ as a cover, $\Phi(e_I) \cap e \neq \emptyset,$ so there exists an index $j \in [2t+1] \setminus I$ such that $u_j \in \Phi(e_I).$ Let $j$ be an arbitrary such index and set $f_e(I) = j.$ One should think of $f_e$ as an assignment to any subset of $t$ vertices of $e$ another vertex of $e$ which belongs to the cover $\Phi$ of this $t$-vertex subset.  
    
    For $f \in \cF$ we set $E_f = \{ e \in E(H) \; \vert \; f_e = f\}.$ Let us fix a function $f \in \cF$ for the remainder of the proof, as discussed before, it is enough to show that $|E_f| = O(n^tk^{t+1}).$ 
    
    For every set $J \in \binom{[2t+1]}{t}$ we construct an \emph{extending sequence} $J=J_0 \subseteq J_1 \subseteq \dots$ together with an auxiliary sequence $I_1, I_2, \dots$, as follows. For $i \geq 0,$ suppose there is a set $I \in \binom{J_i}{t}$ such that $f(I) \not\in J_i.$ Let $I_{i+1}$ be an arbitrary such set, let $J_{i+1} = J_i \cup \{f(I_{i+1})\}$ and note that $|J_{i+1}| = |J_i| + 1.$ If there is no such set, that is, $f(I) \in J_i$ for all $I \in \binom{J_i}{t},$ the sequence ends and we use $C(J) = J_i$ to denote the last set in the extending sequence starting with $J$.
    
    We next show that if there exists a set $J$ for which $C(J)=[2t+1],$ we have the desired bound on the number of edges of type $f$.
    \begin{claim*}
        If there exists a set $J \in \binom{[2t+1]}{t}$ such that $C(J) = [2t+1],$ then $|E_f| = O(n^t k^{t+1}).$
    \end{claim*}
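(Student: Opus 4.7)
The plan is a direct enumeration argument: use the extending sequence starting from $J$ as a recipe that reconstructs every edge of $E_f$ coordinate by coordinate. Since $C(J) = [2t+1]$ and each step of the extending procedure adds exactly one new index, the sequence has the form $J = J_0 \subsetneq J_1 \subsetneq \cdots \subsetneq J_{t+1} = [2t+1]$ with witnesses $I_1, \dots, I_{t+1}$, where $I_i \in \binom{J_{i-1}}{t}$ and $\{f(I_i)\} = J_i \setminus J_{i-1}$. The entire reconstruction therefore takes $t$ free coordinate choices (for $J_0$) followed by exactly $t+1$ constrained coordinate choices (one per step of the sequence).

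To count edges $e = (u_1, \dots, u_{2t+1}) \in E_f$, I would first pick the $t$ vertices $u_j$ for $j \in J_0$ arbitrarily, giving at most $n^t$ options. Then for $i = 1, 2, \dots, t+1$ in order, having already fixed all coordinates in $J_{i-1}$, the set $e_{I_i}$ is determined because $I_i \subseteq J_{i-1}$; since $f_e = f$, the single new vertex $u_{f(I_i)}$ must lie in the cover $\Phi(e_{I_i})$, whose size is at most $(k-1)(t+1) = O(k)$. Multiplying over all stages yields at most $n^t \cdot ((k-1)(t+1))^{t+1} = O(n^t k^{t+1})$ ordered tuples, and since each edge carries the single fixed ordering chosen at the start of the proof of \Cref{lem:reduction-to-set-system}, it is enumerated at most once. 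Thus $|E_f| = O(n^t k^{t+1})$, as required.

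There is no real obstacle inside this claim itself; the argument is essentially bookkeeping, with the only point needing attention being the observation that $\Phi(e_{I_i})$ depends only on coordinates already picked (which follows from $I_i \subseteq J_{i-1}$) and not on the unknown remainder of $e$. The genuine difficulty, addressed in the remainder of the section, is to show that such a starting $J$ must exist for some $J \in \binom{[2t+1]}{t}$, i.e.\ that the extending procedure cannot always halt prematurely; this is precisely where the hypothesized non-existence of a $(t+1,t)$-system on $N = 2t+1$ enters the picture.
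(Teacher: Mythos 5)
Your proposal is correct and follows essentially the same approach as the paper: fix the extending sequence $J_0 \subsetneq \cdots \subsetneq J_{t+1} = [2t+1]$ with witnesses $I_1,\dots,I_{t+1}$, choose the $t$ coordinates in $J_0$ freely, and then reconstruct the remaining $t+1$ coordinates one at a time using the $O(k)$-size covers $\Phi(e_{I_i})$, noting that $I_i \subseteq J_{i-1}$ guarantees the relevant $t$-set is already determined. The paper phrases this as building a superset $T$ of tuples and arguing $E_f\subseteq T$ by contradiction (after a cosmetic relabelling of coordinates so that $J_i = [t+i]$), while you enumerate $E_f$ directly, but the underlying argument and the resulting bound are identical.
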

    \begin{proof}
        Let $J \in \binom{[2t+1]}{t}$ such that $C(J) = [2t+1].$ Let $J=J_0 \subseteq J_1 \subseteq \ldots \subseteq J_{t+1} = [2t+1]$ be the extending sequence of $J$ and $I_1, I_2 \dots, I_{t+1}$ the auxiliary sequence, as given above. For convenience, let us reorder the coordinates so that $J_i = [t+i]$ for $0 \leq i \leq t+1$ and consequently $f(I_i) = t+i,$ for $1 \le i \le t+1.$ We will construct a set of $O(n^t k^{t+1})$ different $(2t+1)$-tuples of vertices $(v_1, \dots, v_{2t+1})$ which contains all edges in $E_f$ (also viewed as $(2t+1)$-tuples). First, we choose $v_1, \dots, v_t$ to be any $t$ distinct vertices. For $1 \leq i \leq t+1,$ let us assume we have already chosen $v_1, \dots, v_{t+i-1}$ and let $S_i = \{ v_j \, \vert \, j \in I_i \}.$ As $I_i \subseteq J_{i-1}= [t+i-1]$ and we have already chosen the vertices $v_1, \dots, v_{t+i-1},$ the set $S_i$ is well-defined. Then we choose $v_{t+i}$ to be any vertex in $\Phi(S_i).$ Let $T$ be the set of all $(2t+1)$-tuples constructed this way. One should think of this process as following the recipe given by type $f$ to generate all $(2t+1)$-tuples which could possibly correspond to an edge (viewed as a $(2t+1)$-tuple) of type $f$.
        
        First observe that there are at most $n^t$ ways to choose the vertices $v_1, \dots, v_t$ and at most $(k-1)(t+1)$ ways to choose each subsequent vertex $v_i,$ for $t+1 \leq i \leq 2t+1,$ thus $|T| \leq n^t \cdot \left((k-1)(t+1)\right)^{t+1} = O(n^t k^{t+1}).$ 
        
        Secondly, let us show that $T$ indeed contains all edges in $E_f.$ For the sake of contradiction, consider an edge $e = (u_1, \dots, u_{2t+1}) \in E_f$ and suppose that $e \not\in T.$ Then there is an index $i$ such that we have chosen $v_j = u_j,$ for $1 \leq j \leq i-1,$ but we never choose $v_i = u_i.$ Clearly $i > t,$ as any $t$-tuple of distinct vertices is chosen for $v_1, \dots, v_t.$ Note that $S_{i-t} = \{ v_j \, \vert \, j \in I_{i-t} \}=\{ u_j \, \vert \, j \in I_{i-t} \}=e_{I_{i-t}},$ since $I_{i-t} \subseteq J_{i-t-1}=[i-1].$ In particular, since we choose $v_i$ to be any member of $\Phi(S_{i-t}),$ we have $u_i \notin \Phi(S_{i-t}).$ On the other hand, by our initial relabelling $f(I_{i-t}) = i$ and since we assumed that $e$ is of type $f$, so $f_e(I_{i-t})=i$, this means that $u_i \in \Phi(e_{I_{i-t}})=\Phi(S_{i-t}),$ a contradiction.
        
        We conclude that $E_f \subseteq T$ and thus $|E_f| = O(n^t k^{t+1}),$ as claimed.
    \end{proof}
    
    Finally, we are left with the case when $C(J) \neq [2t+1]$ for all $J \in \binom{[2t+1]}{t}.$ In this case we construct a $(t+1, t)$-system on the ground set $[2t+1],$ contradicting our assumption. Let $N = 2t+1$ and define 
    \[ \cA = \left\{ C \subsetneq [N] \; \big\vert \; f(I) \in C, \forall I \in \binom{C}{t} \right\}. \]
    It is only left to verify that $\cA$ is indeed a $(t+1, t)$-system. Consider arbitrary sets $A, B \in \cA.$ For any $I \in \binom{A \cap B}{t},$ we have $f(I) \in A$ because $A \in \cA,$ and $f(I) \in B$ because $B \in \cA.$ Thus, $f(I) \in A \cap B,$ implying $A \cap B \in \cA,$ that is, $\cA$ is intersection-closed. Consider a set $J \in \binom{N}{t}.$ By assumption, $C(J) \subsetneq [2t+1]$ and satisfies $f(I) \in C(J)$ for all $I \in \binom{C(J)}{t}.$ Hence, $C(J) \in \cA$ which shows that any subset of $[N]$ of size $t$ is contained in some set in $\cA.$ Note that $N \equiv t \pmod{t+1}$ and $[N] \not\in \cA$ by definition. Consider any $J \in \binom{N}{t}.$ As $f \in \cF,$ we have $f(J) \not\in J,$ so $J \not\in \cA.$ Therefore, no set $A \in \cA$ satisfies $|A| \equiv N \pmod{t+1}$ and we conclude that $\cA$ is a $(t+1, t)$-system. 
\end{proof}

\subsection{The existence of \texorpdfstring{$(t+1,t)$}{(t+1,t)}-systems}
\label{sec:existence}
In this subsection we study the existence of $(t+1, t)$-systems. We begin by showing that no $(t+1,t)$-system exists when $t+1$ is a prime power, regardless of the value of $N$. We include our proof as it is more elementary and much simpler than the previous ones given in \cite{csp} and \cite{submodular}.

\begin{prop} \label{prop:prime-power}
    Let $t+1$ be a prime power. Then there is no $(t+1, t)$ system.
\end{prop}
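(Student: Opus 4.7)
My plan is to argue by contradiction modulo the characteristic prime $p$, where $q = t+1 = p^k$. The key arithmetic input is Lucas's theorem: since the base-$p$ digits of $t = p^k - 1$ are all equal to $p-1$, Lucas gives that for any nonnegative integer $x$ one has $\binom{x}{t} \equiv 1 \pmod p$ if $x \equiv t \pmod q$, and $\binom{x}{t} \equiv 0 \pmod p$ otherwise. I will combine this dichotomy with a fiber-counting identity coming from intersection-closedness.

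The heart of the argument handles the case $N \equiv t \pmod q$. For each $t$-subset $T \subseteq [N]$, the covering property and intersection-closedness let me define $A_T := \bigcap\{A \in \mathcal{A} : T \subseteq A\} \in \mathcal{A}$, the unique minimum element of $\mathcal{A}$ containing $T$. Setting $F(A) := \{T \in \binom{[N]}{t} : A_T = A\}$, the fibers $\{F(A)\}_{A \in \mathcal{A}}$ partition $\binom{[N]}{t}$, and grouping $t$-subsets of $A$ by their closures yields the identity $\binom{|A|}{t} = \sum_{A' \in \mathcal{A},\, A' \subseteq A} |F(A')|$. Since $|A| \not\equiv N \equiv t \pmod q$ for every $A \in \mathcal{A}$, the Lucas dichotomy forces $\binom{|A|}{t} \equiv 0 \pmod p$, and then strong induction on $|A|$ gives $|F(A)| \equiv 0 \pmod p$ for every $A \in \mathcal{A}$. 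Summing, $\binom{N}{t} = \sum_{A \in \mathcal{A}} |F(A)| \equiv 0 \pmod p$; but $N \equiv t \pmod q$ forces $\binom{N}{t} \equiv 1 \pmod p$ by Lucas, a contradiction.

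The general case reduces to the above by padding the ground set. Let $c \in \{0, \ldots, q-1\}$ satisfy $N + c \equiv t \pmod q$, introduce $c$ fresh elements forming a set $C$ disjoint from $[N]$, and set $\mathcal{A}'' = \{A \cup C : A \in \mathcal{A}\}$ on the ground set $[N'] := [N] \cup C$. Intersection-closedness transfers via $(A_1 \cup C) \cap (A_2 \cup C) = (A_1 \cap A_2) \cup C$; every $t$-subset $T' \subseteq [N']$ is still covered, since one may extend $T' \cap [N]$ to a $t$-subset of $[N]$ (in the non-trivial case $N \geq t$), pick $A \in \mathcal{A}$ containing it, and note $T' \subseteq A \cup C$; and $|A \cup C| = |A| + c \not\equiv N + c = N' \pmod q$ because $|A| \not\equiv N \pmod q$. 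Hence $\mathcal{A}''$ is a $(t+1,t)$-system on $[N']$ with $N' \equiv t \pmod q$, contradicting the case already treated. The only delicate piece is the induction, which hinges on the decomposition $\{T \in \binom{[N]}{t} : T \subseteq A\} = \bigsqcup_{A' \subseteq A,\, A' \in \mathcal{A}} F(A')$; this is immediate since $T \subseteq A$ forces $A_T \subseteq A$. The prime-power hypothesis enters exclusively through the Lucas dichotomy, consistent with the paper's remark that $(t+1,t)$-systems genuinely exist when $t+1$ has two distinct prime divisors.
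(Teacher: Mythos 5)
Your proof is correct and rests on the same arithmetic input as the paper --- Lucas's theorem giving $\binom{x}{t}\equiv 1\pmod p$ when $x\equiv t\pmod{t+1}$ and $\binom{x}{t}\equiv 0$ otherwise --- but it replaces the paper's inclusion--exclusion computation with a fiber-counting argument. The paper expands $\binom{N}{t}$ by inclusion--exclusion over $\bigcup_{A\in\mathcal{A}}\binom{A}{t}$, using intersection-closedness only to guarantee each summand $\binom{|\bigcap\mathcal{F}|}{t}$ vanishes mod $p$; you instead exploit intersection-closedness to define the closure map $T\mapsto A_T$, partition $\binom{[N]}{t}$ into fibers $F(A)$, and kill each $|F(A)|$ mod $p$ by strong induction on $|A|$ from the identity $\binom{|A|}{t}=\sum_{A'\subseteq A,\ A'\in\mathcal{A}}|F(A')|$. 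The two arguments are M\"obius-dual and comparably short, but yours avoids alternating signs and reads a little more transparently. You also add a padding reduction (adjoin $c$ fresh elements so that $N+c\equiv t\pmod{t+1}$), which is a genuine bonus: the paper's proof as written invokes ``$|A|\not\equiv -1\pmod{t+1}$ for all $A\in\mathcal{A}$'' and ``$\binom{N}{t}$ has nonzero residue mod $p$,'' both of which require $N\equiv -1\pmod{t+1}$. That congruence is automatic in the paper's application ($N=2t+1$), but the proposition is stated for arbitrary $N$; your reduction closes this gap cleanly. (Both proofs implicitly assume $N\ge t$, the only non-degenerate regime, as you observe.)
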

\begin{proof}

      Let $t + 1 = p^\alpha$ for some prime $p$ and integer $\alpha \geq 1.$ First we calculate the residues of $\binom{a}{t}$ modulo $p$ for any value of $a.$ Let $a = \sum_{i=0}^\ell a_ip^i$ be the expansion of $a$ in base $p,$ where $\ell \geq \alpha-1$ and note that $t = p^\alpha - 1 = \sum_{i=0}^{\alpha-1} (p-1)p^i.$ By Lucas' theorem \cite{lucas1878theorie}:
      \[ \binom{a}{t} \equiv \prod_{i=0}^{\alpha-1} \binom{a_i}{p-1} \pmod{p}. \]
      The product on the right hand side is nonzero if and only if $a_i = p-1$ for all $0 \le i \le \alpha-1.$ Thus,
      \begin{equation} \label{eq:binom-residues}
          \binom{a}{t} \equiv 0 \pmod{p} \iff a \not\equiv -1 \pmod{t+1}.
      \end{equation}
      
    Suppose $\cA$ is a $(t+1, t)$-system. By definition, each $t$-subset of $[N]$ is contained in some set in $\cA.$ Hence, we can use the inclusion-exclusion principle to count the number of $t$-subsets of $[N]$ as follows:
    \[ \binom{N}{t} = \sum_{i=1}^{|\cA|} (-1)^{i+1} \sum_{\substack{\cF \subseteq \cA\\|\cF| = i}} \binom{\left| \bigcap_{F \in \cF} F \right|}{t}. \]
    By \eqref{eq:binom-residues}, the left hand side has nonzero residue modulo $p.$ Recall that $\cA$ is intersection-closed and for all $A \in \cA,$ we have $|A| \not\equiv -1 \pmod{t+1}.$ Hence, by \eqref{eq:binom-residues}, all the summands on the right hand side are congruent to $0$ modulo $p,$ a contradiction.
\end{proof}

In the rest of this subsection we show that no $(t+1, t)$-system with $N=2t+1$ exists, which combined with \Cref{lem:reduction-to-set-system} completes the proof of \Cref{thm:balanced}. As mentioned in the introduction, the restricted ground set size $N=2t+1$ plays a crucial role here, since it was shown in \cite{csp} that in general $(t+1,t)$-systems do exist whenever $t+1$ is not a prime power. 

We will use the following extremal set theoretic result of Frankl and Katona \cite{frankl1979if} and we include its proof for completeness.

\begin{prop} \label{prop:frankl-katona}
    Any collection of $m+1$ not necessarily distinct subsets of $[m]$ contains $s$ sets whose intersection has size exactly $s-1,$ for some $s, \, 1 \leq s \leq m+1.$
\end{prop}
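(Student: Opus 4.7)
The plan is to prove this by induction on $m$. The base case $m = 0$ is immediate: the single subset must be $\emptyset$, so $s = 1$ works, giving an intersection of size $0 = s - 1$.

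For the inductive step, given $m + 1$ subsets $A_1, \ldots, A_{m+1}$ of $[m]$, first observe that if all the sets equal $[m]$, then $s = m + 1$ works, since the intersection is $[m]$ of size $m = s - 1$. Otherwise, some $A_i$ misses an element $x \in [m]$; without loss of generality, $x \notin A_{m+1}$. I would then consider the restricted sets $A_i' := A_i \setminus \{x\}$, which are $m + 1$ subsets of the $(m-1)$-element universe $[m] \setminus \{x\}$. Applying the inductive hypothesis to the sub-family $\{A_1', \ldots, A_m'\}$ (namely $m$ sets in an $(m-1)$-element universe, exactly the size needed) yields some $s' \leq m$ and indices $i_1, \ldots, i_{s'} \in [m]$ such that $|A_{i_1}' \cap \ldots \cap A_{i_{s'}}'| = s' - 1$.

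Lifting back to $[m]$, the intersection $A_{i_1} \cap \ldots \cap A_{i_{s'}}$ has size either $s' - 1$ (if $x$ is absent from at least one $A_{i_j}$) or $s'$ (if $x$ belongs to all of them). The first case gives $s = s'$ immediately. The main obstacle will be the second case, where the intersection is off by one. To handle it, I would look for another set $A_j$ with $j \notin \{i_1, \ldots, i_{s'}\}$ that contains the current intersection $I^{\star} := A_{i_1} \cap \ldots \cap A_{i_{s'}}$; if such a set exists, adding it yields $s' + 1$ sets whose intersection remains $I^{\star}$ of size $s' = (s' + 1) - 1$, so $s = s' + 1$ works. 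Otherwise, every remaining set misses at least one element of $I^{\star}$, and I would attempt to apply the proposition recursively inside the universe $I^{\star}$ to the sub-family $\{A_j \cap I^{\star} : j \notin \{i_1, \ldots, i_{s'}\}\}$, whose members are proper subsets of $I^{\star}$. The technically delicate point is aligning the off-by-one offsets when translating the recursive conclusion back to the original problem, and verifying that the parameters of the sub-instance (the $m+1-s'$ restricted sets in an $s'$-element universe) match the hypothesis of the proposition; this may require additional case analysis, or choosing a slightly different initial sub-family to apply the induction to, so that the recursion closes cleanly.
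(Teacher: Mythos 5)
Your inductive framework has a genuine gap that your own closing sentences anticipate, and the gap is real: the ``delicate off-by-one'' issue does not close. Consider what happens when $x$ lies in all of $A_{i_1},\dots,A_{i_{s'}}$ so that the lifted intersection $I^\star$ has size $s'$, and no other set contains $I^\star$. Your plan is to recurse inside $I^\star$ on the restricted sets $\{A_j\cap I^\star : j\notin\{i_1,\dots,i_{s'}\}\}$. There are two problems. First, the parameter counting can fail: you have only $m+1-s'$ remaining sets but the universe $I^\star$ has size $s'$, so applying the proposition would need $s'+1$ sets, and $m+1-s'\ge s'+1$ forces $s'\le m/2$, which need not hold. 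Second, even when the recursion is applicable, the conclusion does not translate: if the recursion yields $s''$ restricted sets with intersection of size $s''-1$ inside $I^\star$, then combining those $s''$ originals with the $s'$ originals gives $s'+s''$ sets whose common intersection has size $s''-1$, which is $(s'+s'')-1$ only if $s'=0$. A concrete instance where the procedure as written gets stuck is $m=4$, $A_1=A_2=A_3=A_4=[4]$, $A_5=\{2,3,4\}$: with $x=1$ the projected family forces $s'=4$ and $I^\star=[4]$, leaving a single remaining set and no way to recurse, even though $s=4$ with $A_1,A_2,A_3,A_5$ witnesses the statement.

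The paper's proof proceeds quite differently, by contradiction rather than by constructing the witness directly. Assuming no $s$ sets intersect in exactly $s-1$ elements, it first shows (using the induction hypothesis in a cleverly localized way, inside $A_i\setminus\{x\}$) that every element $x\in A_i$ satisfies $d(x)\le |A_i|$, where $d(x)$ is the number of sets containing $x$. It then uses the induction hypothesis again to verify Hall's condition and extracts a system of distinct representatives for $A_1,\dots,A_m$; relabelling so $i\in A_i$ and double-counting incidences gives $\sum_{i=1}^{m+1}|A_i|=\sum_{i=1}^m d(i)\le\sum_{i=1}^m|A_i|$, forcing $A_{m+1}=\emptyset$, which contradicts the assumption for $s=1$. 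This sidesteps the alignment issue entirely: it never tries to lift a witness from a smaller universe, and the two uses of the induction hypothesis are both as subroutines in a contradiction argument rather than as the engine of a constructive recursion. If you want to salvage a direct approach you would need a substantially different mechanism for the ``bad case''; as proposed, the recursion does not close.
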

\begin{proof}
    We prove the statement by induction on $m$, with the base case $m=0$ being trivial. Now let $m\ge 1, \,$ $A_1, \dots, A_{m+1} \subseteq [m]$ and suppose, for the sake of contradiction, that no $s$ sets have intersection of size exactly $s-1$ for any $s, \, 1 \le s \le m+1.$ For $x \in [m],$ denote by $d(x)$ the number of sets $A_1, \dots, A_{m+1}$ containing $x.$ 
    
    First we show that $x \in A_i$ implies $d(x) \le |A_i|.$ Suppose $x \in A_i$ and $d(x) > |A_i|$ for some $x \in [m], i \in [m+1].$ Consider the collection of not necessarily distinct sets $\{ (A_j \cap A_i) \setminus \{x\} \, \vert \, j \neq i, \, x \in A_j \}.$ There are $d(x) - 1 > |A_i| - 1$ of these sets and they are subsets of $A_i \setminus \{x\}$ which has size $|A_i| - 1 \le m-1.$ Thus, we may use the induction hypothesis to conclude there exist a positive integer $s$ and indices $j_1 < \ldots < j_s$ such that $\left| \bigcap_{\ell=1}^s ((A_{j_\ell} \cap A_i) \setminus \{x\}) \right| = s-1.$ But then $\left|A_i \cap \bigcap_{\ell=1}^s (A_{j_\ell}) \right| = s,$ contradicting our assumption. Therefore, $d(x) \le |A_i|,$ for any $i \in [m+1], \, x \in A_i.$
    
    By the induction hypothesis, the union of any $k$ sets must have size at least $k,$ for any $1 \le k \le m.$ By Hall's theorem, there is a system of distinct representatives for $A_1, \dots, A_m$ so upon relabelling we may assume that $i \in A_i,$ for all $i \in [m].$ In particular, we must have $d(i) \le |A_i|,$ for all $i \in [m]$. We now have
    \[ \sum_{i=1}^{m+1} |A_i| = \sum_{i=1}^m d(i) \le \sum_{i=1}^m |A_i|. \]
    Hence, $A_{m+1} = \emptyset,$ contradicting our assumption for $s=1$.
\end{proof}

We are now ready to complete the proof of the balanced case.
\begin{lem} \label{lem:no-system}
    There exists no $(t+1, t)$-system on a ground set of size $N = 2t+1.$
\end{lem}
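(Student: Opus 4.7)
The plan is to prove by induction on $k \in \{0, 1, \ldots, t\}$ that every $(t+k)$-subset of $[2t+1]$ is contained in some member of $\mathcal{A}$; the base case $k=0$ is part of the definition of a $(t+1,t)$-system. The payoff is that at $k = t$, every $2t$-subset of $[2t+1]$ must actually \emph{be} an element of $\mathcal{A}$ (since $|A| \leq 2t$ for every $A \in \mathcal{A}$), and intersecting any $t+1$ of the $2t$-subsets $[2t+1]\setminus\{c_j\}$ then yields, by intersection-closure, an element of $\mathcal{A}$ of size $2t+1-(t+1)=t$, contradicting the forbidden residue condition.

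For the inductive step, fix $k \in \{1, \ldots, t\}$ and a $(t+k)$-subset $C \subseteq [2t+1]$. For each $c \in C$ set $B_c := C \setminus \{c\}$ and let $M(B_c) := \bigcap\{A \in \mathcal{A} : A \supseteq B_c\}$, which lies in $\mathcal{A}$ by intersection-closure and the inductive hypothesis. If some $M(B_c) \supseteq C$, we are done; otherwise $M(B_c) \cap C = B_c$ for every $c$, and we may write $M(B_c) = B_c \cup S_c$ with $S_c \subseteq C^c := [2t+1]\setminus C$, a ground set of size $t-k+1$.

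The key trick is to reach the magic Frankl--Katona size by \emph{inflating} the ground set. Introduce $k-1$ dummy elements $z_1, \ldots, z_{k-1}$ outside $[2t+1]$ and set $\widetilde{S}_c := S_c \cup \{z_1, \ldots, z_{k-1}\}$; these are $t+k$ subsets of a ground set of size $(t-k+1)+(k-1) = t$. Apply \Cref{prop:frankl-katona} with $m = t$ to any $t+1$ of them to obtain distinct indices $c_1, \ldots, c_s \in C$ with $|\widetilde{S}_{c_1} \cap \cdots \cap \widetilde{S}_{c_s}| = s-1$. Each $\widetilde{S}_c$ contains the $k-1$ dummies, so the intersection automatically has size at least $k-1$, giving $s \geq k$; additionally, $|\widetilde{S}_c| \geq 1$ always holds (for $k=1$ this follows from $|B_c| = t$ being a forbidden size, forcing $|S_c| \geq 1$), ruling out the degenerate $s=1$ possibility, so $s \geq 2$ in all cases.

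Stripping off the dummies yields $|S_{c_1} \cap \cdots \cap S_{c_s}| = s-k$. Now $A^* := \bigcap_{j=1}^{s} M(B_{c_j})$ lies in $\mathcal{A}$ by intersection-closure and $s \geq 2$, and since $M(B_c) \cap C = B_c$ for every $c$, one computes $|A^* \cap C| = |C \setminus \{c_1, \ldots, c_s\}| = t+k-s$ and $|A^* \cap C^c| = s-k$, hence $|A^*| = t$, contradicting $|A| \not\equiv t \pmod{t+1}$. The main obstacle is precisely the alignment of these numbers: the dummy count $k-1$ is forced by the need for the Frankl--Katona ground set to have size exactly $t$, so that the telescoping total $|A^*|$ lands on the unique forbidden residue in the accessible range --- this is where the hypothesis $N = 2t+1$ is essential and why the lemma can fail for larger ground sets.
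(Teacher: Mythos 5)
Your proof is correct, and it rests on the same two pillars as the paper's argument: \Cref{prop:frankl-katona} and the observation that intersection-closure plus the residue constraint forbid an intersection of size exactly $t$. The route differs mainly in presentation. The paper takes a single minimal uncovered set $S$ of size $\ell\ge t+1$, relabels so that $[t+1]\subseteq S$, and splits each witness $A_i$ (covering $S\setminus\{i\}$ but not $S$, for $i\in[t+1]$) along the fixed partition $[t+1]\cup\{t+2,\ldots,2t+1\}$; the traces $D_i=A_i\setminus[t+1]$ then live in a ground set of size exactly $t$, so \Cref{prop:frankl-katona} applies directly, and the elements $\{t+2,\ldots,\ell\}\subseteq S$ play, automatically and for free, precisely the role of your $k-1$ dummy elements, since they sit inside every $D_i$. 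You instead split along $C\cup C^{\mathrm{c}}$ where $|C^{\mathrm{c}}|=t-k+1<t$, forcing the explicit dummy-inflation step to realign the ground set size. The arithmetic $(t+1-s)+(s-1)=t$ in the paper and $(t+k-s)+(s-k)=t$ in yours are the same computation shifted by $k-1$. Your induction on $k$ is not strictly necessary --- applying your argument directly to a minimal uncovered set, exactly as the paper does, would work --- and your separate ``payoff'' at $k=t$ is in fact the $\ell=2t+1$ instance of the same mechanism, where Frankl--Katona degenerates. One small remark: you do not actually need $s\ge2$ to conclude $A^*\in\mathcal{A}$, since intersection-closure trivially covers $s=1$ as well; what matters is ruling out $s=1$ to keep $|A^*\cap C^{\mathrm{c}}|=s-k\ge0$, which you correctly do via $|\widetilde S_c|\ge1$. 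The dummy trick is a nice portable device, but in this particular lemma the paper's choice of the fixed window $[t+1]$ achieves the same alignment more economically.
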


\begin{proof}
    Suppose $\cF$ is a $(t+1, t)$-system on $[N] = [2t+1].$ Let $S$ be a minimal subset of $[N]$ not covered by $\cF,$ i.e. such that $S \not\subseteq F, \, \forall F \in \cF.$ As $[N] \not\in \cF,$ the set $S$ is well-defined. Since we know that any $t$-element subset is covered by the second defining property of the $(t+1,t)$-system, we must have $\ell = |S| \ge t+1$. Without loss of generality, assume $S = \{1, \dots, \ell\}.$ By minimality of $S,$ for any $i\in [\ell]$ there must exist a set in $\cF$ which covers $S \setminus \{i\}.$ In particular, let $A_1, \dots, A_{t+1} \in \cF$ be sets such that $A_i \cap S = S \setminus \{i\},$ for $i\in [t+1].$ 
    
    Our plan is to find a collection of sets among $A_1, \dots, A_{t+1}$ whose intersection has size precisely $t,$ contradicting the third property of a $(t+1, t)$-system. Observe that for any $m$ of these sets, the size of their intersection inside $[t+1]$ is exactly $t+1 - m.$ To control the size of this intersection outside $[t+1],$ we will use \Cref{prop:frankl-katona}.
    
    For $1 \le i \le t+1,$ let $D_i = A_i \setminus [t+1].$ The sets $D_1, \dots, D_{t+1}$ all belong to the set $\{t+2,\ldots, 2t+1\}$ of size $t,$ so by \Cref{prop:frankl-katona}, there exists an integer $s$ and indices $1 \le j_1 < j_2 < \ldots < j_s \le t+1$ such that $\left|\bigcap_{i=1}^s D_{j_i}\right| = s-1.$ Let $A \coloneqq  \big(\bigcap_{i=1}^s A_{j_i}\big) \in \cF.$ We have
    \[ |A| = |A \cap [t+1]| + |A \setminus [t+1]| = |[t+1] \setminus \{j_1, \dots, j_s\}| + \big|\bigcap_{j=1}^s D_{j_i}\big|  = t+1-s + s - 1 = t. \]
    Thus, $|A| \equiv N \pmod{t+1},$ contradicting the assumption that $\cF$ is a $(t+1, t)$-system.
\end{proof}

\section{Reduction to the balanced case}
\label{sec:middle-case-reduction}
In this section we will complete the proof of the upper bound part of \Cref{thm:main}. We begin by proving an auxiliary lemma.

\begin{lem}\label{lm:many-forbidden-stars}
Let $r \ge 2$ and $r>\ell\ge 1$. Any $n$-vertex $r$-uniform hypergraph $H$ without $\S{r}{s}{k}$ for all $\ell \le s \le r-1$ has at most $O(n^{\ell}k^{r-\ell})$ edges.
\end{lem}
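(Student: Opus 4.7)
The plan is to strengthen the lemma to cover the boundary value $\ell=0$ (where \emph{every} nontrivial kernel size is forbidden) and then prove the strengthened statement by a single induction on $r$. Specifically, I would show that for $r\ge 1$ and $0\le \ell\le r-1$, any $n$-vertex $r$-uniform $H$ containing no copy of $\S{r}{s}{k}$ for each $\ell\le s\le r-1$ has at most $O(n^{\ell}k^{r-\ell})$ edges. The case $\ell=0$ gives a bound $O(k^{r})$ independent of $n$, and this is precisely the leaf that the rest of the induction needs.

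The base case $r=1$ only allows $\ell=0$, and the hypothesis is ``no matching of size $k$,'' so $|E(H)|\le k-1$. For the inductive step, assume the strengthened claim for $r-1$ and consider an $H$ satisfying the hypothesis with parameter $\ell$. The essential observation is that for any vertex $v$, the link $L_v$ is $(r-1)$-uniform, and if it contained a sunflower $\S{r-1}{s'}{k}$ then adjoining $v$ to every edge (and to the kernel) would produce $\S{r}{s'+1}{k}$ in $H$; consequently $L_v$ avoids $\S{r-1}{s'}{k}$ for every $\ell-1\le s'\le r-2$.

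If $\ell\ge 1$, the induction hypothesis applied to each $L_v$ with parameters $r-1$ and $\ell-1$ yields $|E(L_v)|\le O(n^{\ell-1}k^{r-\ell})$; double-counting $\sum_{v}|E(L_v)|=r|E(H)|$ then gives $|E(H)|\le O(n^{\ell}k^{r-\ell})$. If instead $\ell=0$, the hypothesis forbids $\S{r}{0}{k}$ in particular, so a maximum matching of $H$ has fewer than $k$ edges and its vertex set $C$ is a vertex cover of $H$ with $|C|\le r(k-1)$. For each $v\in C$ the link $L_v$ inherits the property of avoiding sunflowers of \emph{every} kernel size, so by induction $|E(L_v)|\le O(k^{r-1})$; summing over $v\in C$ (which covers all edges of $H$) yields $|E(H)|\le |C|\cdot O(k^{r-1})=O(k^{r})$, matching the target bound for $\ell=0$.

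I do not foresee a real obstacle: the key insight is that a clean induction demands extending the lemma down to $\ell=0$, since each step of the link argument drops both $r$ and $\ell$ by one. Once $\ell$ reaches $0$ one can no longer afford a ``sum over all $n$ vertices'' bound, and must instead use the vertex-cover-of-size-$O(k)$ produced from the $\S{r}{0}{k}$-free hypothesis — essentially the same trick as in \Cref{obs:cover}. With that extension in place, the remainder is a routine link computation.
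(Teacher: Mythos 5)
Your proof is correct. It uses the same core mechanism as the paper — a forbidden $\S{r}{s}{k}$ forces the relevant link graph to have a vertex cover of size $O(k)$, and one recurses through links — but you organize the recursion differently. The paper keeps everything inside a single hypergraph $H$: it proves by reverse induction on $|S|$ (for $\ell\le|S|\le r$) that $|L_S|\le(rk)^{r-|S|}$, then sums the bound for $|L_S|$ over all $\ell$-subsets $S$. You instead do a forward induction on the uniformity $r$, treating each vertex link $L_v$ as an independent $(r-1)$-uniform hypergraph to which the (strengthened) lemma applies; for $\ell\ge1$ you then close the argument by double-counting $\sum_v|E(L_v)|=r|E(H)|$ over \emph{all} vertices, while for $\ell=0$ you must revert to the cover trick. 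Extending the statement to $\ell=0$ is genuinely necessary for your induction to bottom out, and is the one extra ingredient the paper sidesteps by starting the reverse induction directly at $|S|=\ell$. The trade-off: the paper's version is slightly more economical and yields an explicit constant $n^{\ell}(rk)^{r-\ell}$ in one pass, while yours is arguably more modular — the strengthened statement with $\ell=0$ (giving an $n$-free bound $O(k^r)$ under the stronger hypothesis) is of independent interest and your induction step is uniform in shape. Both arguments are sound and deliver the same bound.
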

\begin{proof}
We claim that for any subset $S$ of at least $\ell$ and at most $r$ vertices of $H,$ its link graph $L_S$ has size at most $(rk)^{r-|S|}$. We prove this by reverse induction on $|S|$. As the basis if $|S|=r,$ its link graph consists of only the empty set and in particular has size $1$. Now assume that for all subsets of size $|S|+1$ our claim holds. The fact there is no $\S{r}{|S|}{k}$ in $H$ implies through \Cref{obs:cover} that there exists a cover of $L_S$ of size at most $(r-|S|)(k-1)\le rk$. In particular, the size of $L_S$ is at most the sum of the sizes of the link graphs of $L_{S \cup \{v\}}$ as $v$ ranges over the vertices in the cover. Using the inductive assumption to bound the size of each of these link graphs we conclude $|L_{S}| \le rk \cdot (rk)^{r-|S|-1}=(rk)^{r-|S|},$ as desired.

Since there are at most $n^\ell$ sets of size $\ell$ we conclude there are at most $n^\ell (rk)^{r-\ell} \le O(n^\ell k^{r-\ell})$ edges in $H$.
\end{proof}

We are now ready to complete the proof of the upper bound of \Cref{thm:main} which we restate below for convenience.

\begin{thm}\label{thm:main-ub} Let $r > t \ge 0,$ then
$$
\ex(n,\S{r}{t}{k})\le 
\begin{cases}
O(n^{r-t-1}k^{t+1}) & \text{ if } t \le \frac{r-1}{2}, \\
O(n^{t}k^{r-t}) & \text{ if } t \ge \frac{r-1}{2}.
\end{cases}
$$
\end{thm}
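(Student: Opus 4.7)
The proof of \Cref{thm:main-ub} handles Case~1 ($t \le (r-1)/2$) and Case~2 ($t \ge (r-1)/2$) separately; the shared balanced case $r = 2t+1$ is furnished by \Cref{thm:balanced}.

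For Case~2 I would induct on $r$, with base case $r = 2t+1$ supplied by \Cref{thm:balanced}. For the inductive step with $r < 2t+1$, fix any vertex $v$ and consider its link $L_v$, which is an $(r-1)$-uniform hypergraph. Any copy of $\S{r-1}{t-1}{k}$ inside $L_v$ would, upon adjoining $v$ to its kernel, yield a $\S{r}{t}{k}$ in $H$; hence $L_v$ is $\S{r-1}{t-1}{k}$-free. Since $r - 1 \le 2t$, the pair $(r-1, t-1)$ is still in Case~2 (or is balanced when $r = 2t$), so the inductive hypothesis gives $|L_v| \le O(n^{t-1} k^{r-t})$. Summing over $v$ and dividing by $r$ yields $|E(H)| = \tfrac{1}{r}\sum_v |L_v| \le O(n^t k^{r-t})$, as required.

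For Case~1 with $r > 2t+1$, the target bound $O(n^{r-t-1} k^{t+1})$ is exactly the conclusion of \Cref{lm:many-forbidden-stars} taken with $\ell = r-t-1$; the prerequisite of that lemma is $\S{r}{s}{k}$-freeness for every $s \in [r-t-1, r-1]$. Since $r-t-1 > t$ in the strict case, this is not immediate from the given hypothesis. The plan is to either verify the additional freeness directly, or to sparsify $H$ to a subhypergraph $H'$ that satisfies the full range of sunflower-freeness while losing only $O(n^{r-t-1} k^{t+1})$ edges. The key lever for the sparsification is that each $s$ in the required range satisfies $s \ge (r-1)/2$, so the pair $(r, s)$ falls in the already-established Case~2, giving $\ex(n, \S{r}{s}{k}) \le O(n^s k^{r-s})$; in particular the extremal value at $s = r-t-1$ already equals $O(n^{r-t-1} k^{t+1})$, which is exactly the bound we want.

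The hardest part will be integrating the Case~2 bounds seamlessly into the sparsification step so that the total number of removed edges is controlled by $O(n^{r-t-1} k^{t+1})$. Concretely, I expect to iterate over $s = r-1, r-2, \ldots, r-t-1$ and, at each stage, remove one edge from every $\S{r}{s}{k}$ still present in the current hypergraph; the loss at each stage should be bounded by the Case~2 extremal number for the corresponding $s$, and summing the $t+1$ contributions gives a total loss of the desired order. After this clean-up, \Cref{lm:many-forbidden-stars} applied to the resulting $H'$ concludes the proof.
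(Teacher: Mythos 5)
Your handling of Case~2 is correct and matches the paper's argument (the paper extracts a maximum-degree vertex and passes to its link, while you average over all vertices, but these are the same idea). The balanced case is also handled identically via \Cref{thm:balanced}.

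The gap is entirely in Case~1 with $r\ge 2t+2$, and it is a real one. First, the claim that you can make the hypergraph $\S{r}{s}{k}$-free by deleting at most $\ex(n,\S{r}{s}{k})$ edges is false in general: the number of edges that must be removed to destroy all copies of a fixed pattern is not bounded by the Tur\'an number (e.g.\ a complete bipartite graph needs $\Theta(n^2)$ edge deletions to become $C_4$-free, far exceeding $\ex(n,C_4)=\Theta(n^{3/2})$), and you give no argument for why the $\S{r}{t}{k}$-freeness of $H$ should make it true here. Second, even granting that bound, the sum $\sum_{s=r-t-1}^{r-1}\ex(n,\S{r}{s}{k}) = \sum_s O(n^sk^{r-s})$ is dominated by the $s=r-1$ term $O(n^{r-1}k)$, which for $k\le n$ is \emph{larger} than the target $O(n^{r-t-1}k^{t+1})$; so the accounting does not close.

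The paper's actual argument is structurally different: rather than deleting an edge from each remaining copy of $\S{r}{s}{k}$, it deletes the edges that contain some ``$rk$-extending'' set of size $r-i$ (a set that is the kernel of an $rk$-petal sunflower). The number of such sets of size $r-i$ is bounded not by $\ex(n,\S{r}{s}{k})$ but by $\ex(n,\S{r-i}{t}{k})$ — note the drop in uniformity and, crucially, that the forbidden kernel size stays $t$, the size one actually controls — because $k$ such extending sets arranged in a copy of $\S{r-i}{t}{k}$ could be greedily extended to a copy of $\S{r}{t}{k}$ in $H$. Each extending $(r-i)$-set lies in at most $O(nk^{i-1})$ remaining edges (this uses \Cref{lm:many-forbidden-stars} with $\ell=1$ on the link, after the cleanup at earlier stages). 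The product $\ex(n,\S{r-i}{t}{k})\cdot O(nk^{i-1})$ is then checked (in two sub-cases) to be $O(n^{r-t-1}k^{t+1})$. Only after this removal is \Cref{lm:many-forbidden-stars} applied (with $\ell=t+1$ and petal count $rk$), giving $O(n^{t+1}k^{r-t-1})\le O(n^{r-t-1}k^{t+1})$. The ingredients you would need to add to your sketch are: (i) bound the number of ``large-kernel'' sunflower kernels, not the number of copies, (ii) derive that bound from the $\S{r}{t}{k}$-freeness of $H$ via lower-uniformity extremal numbers with kernel size $t$, and (iii) separately bound how many edges each such kernel can sit in.
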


\begin{proof}
We proceed by induction on $r$. The base case of $r=1$ is immediate. Let us now assume $r \ge 2$ and we assume the theorem holds for all sunflowers with smaller uniformity.

Let $H$ be an $n$-vertex $r$-uniform hypergraph with at least $Cn^{r-t-1}k^{t+1}$ edges if $2t+1 \le r$ and at least $Cn^tk^{r-t}$ edges if $2t+1 \ge r$, where we choose $C=C(r)$ large enough.

If $r \le 2t$ then there is a vertex $v$ of $H$ belonging to at least $rC\cdot n^{t-1}k^{r-t}$ edges. Looking at the link graph of $v$, it is $(r-1)$-uniform, has $n-1$ vertices and at least $rC\cdot n^{t-1}k^{r-t}$ edges. Since $t-1 \ge \frac{r-2}{2}$ the induction hypothesis tells us $\ex(n-1,\S{r-1}{t-1}{k}) \le O(n^{t-1}k^{r-t})$. Choosing $C$ large enough this guarantees a copy of $\S{r-1}{t-1}{k}$ inside the link graph of $v$, which together with $v$ makes the desired copy of $\S{r}{t}{k}$ in $H$, as desired.

If $r=2t+1$ the result follows from \Cref{thm:balanced} so we are left with the case when $r \ge 2t+2$. Call a set $L$ of vertices of $H$ \emph{$m$-extending} if there are $m$ edges of $H$, all of which contain $L$ and are all disjoint outside of $L$. In other words, they make a sunflower with $L$ as a kernel.

The following claim exploits the fact that one can use $m$-extending sets to build a copy of $\S{r}{t}{k}$ in $H.$

\begin{claim} \label{clm:few-extending-sets}
    For any $1 \le i \le r-t-1,$ there are at most $\ex(n, \S{r-i}{t}{k})$ $rk$-extending sets of size $r-i$. 
\end{claim}
\begin{proof}
    Let us assume the contrary, so there exist $k$ distinct $rk$-extending sets of $r-i$ vertices forming a copy of $\S{r-i}{t}{k}.$ Then we can greedily extend them, one by one, to obtain a copy of $\S{r}{t}{k}$ in $H.$ Indeed, at any point, the union of the sets we constructed has size at most $(k-1)r.$ Hence, the current one, having $rk$ vertex disjoint choices of an $i$ vertex set for its extension, can be chosen disjointly from all the previous ones. Therefore, $H$ contains a copy of $\S{r}{t}{k},$ a contradiction.
\end{proof}

We now show that by removing any edge of $H$ containing an $rk$-extending $r-i$ vertex set for some $i \ge 1$ we remove at most half of the edges from $H$. 

\begin{claim}
There are at most $O(n^{r-t-1}k^{t+1})$ edges of $H$ which contain an $rk$-extending $r-i$ vertex set for some $i \le r-t-1$.
\end{claim}
\begin{proof}
We will first show there are at most $O(n^{r-t-1}k^{t+1})$ edges which contain an $rk$-extending $r-1$ vertex set. Since $t \le \frac{r-2}{2},$ we know that there are at most $\ex(n, \S{r-1}{t}{k}) \le O(n^ {r-t-2}k^{t+1})$ $rk$-extending $r-1$ vertex sets by \Cref{clm:few-extending-sets}. Each of them belongs to at most $n$ different edges so indeed at most $O(n^{r-t-1}k^{t+1})$ edges of $H$ contain such a set. Let us remove all these edges from $H$ to create $H_2$. We now assume that we are given a hypergraph $H_{i}$ which does not contain an $rk$-extending $r-j$ vertex set for any $j < i$ and we wish to count how many edges of $H_i$ contain an $rk$-extending $r-i$ vertex set. 

By \Cref{clm:few-extending-sets}, there are at most $\ex(n, \S{r-i}{t}{k})$ such $r-i$ sets. Given a fixed one, observe that its link graph in $H_i$, being an $i$-uniform hypergraph, contains no $\S{i}{j}{rk}$ for any $1\le j < i$. Indeed, adding back in the $r-i$ set this would give rise to an $rk$-extendable set of size $r-i+j$ in $H_i$, which we have removed in one of the previous steps. By \Cref{lm:many-forbidden-stars} (with $\ell=1$ and $r=i$) this tells us the link graph can have size at most $O(nk^{i-1})$. Combining, this implies that at most  $O(nk^{i-1}) \cdot \ex(n, \S{r-i}{t}{k})$ edges of $H_i$ contain an $rk$-extending $r-i$ vertex set.

If $2t \le r-i-1$ then $$\ex(n, \S{r-i}{t}{k}) \le O(n^{r-i-t-1}k^{t+1})$$ so the total number of our edges is $$O(n^{r-i-t}k^{t+i})\le O(n^{r-t-1}k^{t+1})$$ since $i \ge 1$ and we may assume $k \le n$ (otherwise the claim is vacuous). 

If $2t > r-i-1,$ then $$\ex(n, \S{r-i}{t}{k}) \le O(n^{t}k^{r-i-t}),$$ so the total number of our edges is $$O(n^{t+1}k^{r-t-1})\le O(n^{r-t-1}k^{t+1})$$
which holds since $r\ge 2t+2$ and since, as before, we may assume $k\le n$.
\end{proof}

After removing any of the edges counted by the claim, the remainder graph has no $\S{r}{r-i}{rk}$ for $1 \le i < r-t$, since a kernel of any such sunflower would be an $rk$-extending set of $r-i$ vertices. \Cref{lm:many-forbidden-stars} with $\ell=t+1$ now tells us that there at most $O(n^{t+1}k^{r-t-1})\le O(n^{r-t-1}k^{t+1})$ edges remaining. Since we also removed at most this many edges by the claim, we are done. 
\end{proof}

\section{Lower bounds}

In this section we will establish the lower bounds on $\ex(n,\S{r}{t}{k})$ which will complete the proof of \Cref{thm:main}. Observe first that if $k=\Omega(n)$ then our goal is to show that $\ex(n,\S{r}{t}{k}) \ge \Omega(n^r)$. This holds since the complete $r$-uniform hypergraph on $t+k(r-t)-1=|\S{r}{t}{k}|-1$ vertices has $\Omega(k^r)=\Omega(n^r)$ edges and clearly does not contain $\S{r}{t}{k}.$ So we only need to prove our lower bounds when $k \le cn$ for an arbitrary constant $c$.

We will use two fundamentally different classes of examples depending on which side of the balanced case we are on, in other words whether the size of the kernel of our forbidden sunflower is larger or smaller than half of the uniformity. We begin with the easier case when the kernel is smaller.

\begin{lem}\label{lm:lower-bound-small-centre}
Let $r\ge 2t+1 \ge 1$ and $k \le n/2,$ then
$$\ex(n,\S{r}{t}{k}) \ge \Omega(n^{r-t-1}k^{t+1}).$$
\end{lem}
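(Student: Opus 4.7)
The plan is to exhibit an explicit $n$-vertex $r$-graph $H$ that avoids $\S{r}{t}{k}$ while having $\Omega(n^{r-t-1} k^{t+1})$ edges. The main construction is the following ``popular set'' hypergraph: fix a set $S \subseteq [n]$ of size exactly $k-1$ and let $H$ be the $r$-graph whose edges are all $r$-subsets $e \subseteq [n]$ with $|e \cap S| = t+1$. Equivalently, each edge of $H$ is a disjoint union of a $(t+1)$-subset of $S$ and an $(r-t-1)$-subset of $[n] \setminus S$.

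The edge count is a one-line binomial calculation: $|E(H)| = \binom{k-1}{t+1}\binom{n-k+1}{r-t-1}$. Since $k \le n/2$ and $r-t-1 \ge t \ge 0$, the second factor is $\Theta(n^{r-t-1})$, and the first factor is $\Theta(k^{t+1})$ as soon as, say, $k \ge 2(t+1)$. In the remaining regime $k \le 2(t+1) = O(1)$, the target bound $n^{r-t-1}k^{t+1}$ is just $\Theta(n^{r-t-1})$, which one gets from the trivial fallback construction consisting of all $r$-subsets containing a single fixed $(t+1)$-subset of $[n]$. This fallback clearly has $\Theta(n^{r-t-1})$ edges and every pair of its edges intersects in at least $t+1$ vertices, so it cannot contain $\S{r}{t}{k'}$ for any $k' \ge 2$.

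The core step is to verify that the main $H$ is $\S{r}{t}{k}$-free. Suppose for contradiction that $e_1, \dots, e_k \in E(H)$ formed such a sunflower with kernel $K$ of size $t$ and pairwise disjoint petals $e_i \setminus K$. Since $|e_i \cap S| = t+1$ and $|K \cap e_i| \le |K| = t$, each set $(e_i \setminus K) \cap S$ contains at least $(t+1) - t = 1$ vertex of $S$. As the petals are pairwise disjoint, these $k$ nonempty sets are pairwise disjoint subsets of $S$, forcing $|S| \ge k$ and contradicting $|S| = k-1$.

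I do not anticipate any genuine obstacle: the construction is completely explicit, the edge count is immediate, and the $\S{r}{t}{k}$-freeness reduces to a short pigeonhole argument on the petal intersections with $S$. The only mildly fiddly point is the small-$k$ boundary case $k \le t+1$, where the binomial $\binom{k-1}{t+1}$ vanishes, but this is absorbed at no cost by the fallback construction described above.
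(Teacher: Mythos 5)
Your proposal is correct and takes essentially the same approach as the paper: the paper fixes a set $B$ of size $\max\{k-1,t+1\}$, takes all $r$-sets meeting $B$ in exactly $t+1$ vertices, and rules out $\S{r}{t}{k}$ by the same pigeonhole on non-kernel vertices landing in $B$. Your separate fallback construction for small $k$ is precisely what the paper's choice $|B|=t+1$ reduces to, so the two arguments coincide up to notation.
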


\begin{proof}

Let $s=\max\{k-1,t+1\}$. We take $A$ to be a set of $n-s$ vertices and $B$ a set of $s$ vertices. We now choose $H$ to be the hypergraph with vertex set $A
\cup B$ consisting of all edges which have exactly $r-t-1$ vertices in $A$ and $t+1$ vertices in $B$. Such an $H$ has $$\binom{n-s}{r-t-1} \cdot \binom{s}{t+1} \ge \Omega(n^{r-t-1}k^{t+1})$$
edges, where the case when $s=t+1$ follows since this means $k \le t+2$ is a constant.
To complete the proof we will show it does not contain $\S{r}{t}{k}$. If $s = t+1,$ any two edges intersect in more than $t$ vertices so the statement is trivial. Otherwise, $s=k-1$ and let us assume to the contrary that there is a copy $S$ of $\S{r}{t}{k}$ in $H$. Note that each edge of $S$ must have one of its $r-t$ vertices not in the kernel of $S$ belonging to $B$ (since every edge of $H$ has exactly $r-t-1$ vertices inside $A$). If we take one such vertex for each of the $k$ edges of $S$ we know they all belong to $B$ and must be distinct since they do not belong to the kernel of $S$. This is a contradiction to $|B|=k-1$, completing the proof.
\end{proof}

A slightly refined version of this construction where one takes the set $B$ to be larger but only allows certain subsets of size $t+1$ of $B$ to participate in the edges is conjectured by Frankl and F\"uredi \cite{FF87} to be nearly optimal for this case, when both $k$ and $r$ are assumed to be constant. In particular, they conjecture that one should choose the allowed subsets of $B$ based on the optimal example for the classical Erd\H{o}s-Rado sunflower conjecture mentioned in the introduction, so that there is no sunflower of size $k,$ for any size of the kernel, made by the allowed $t+1$-vertex subsets of $B$. 

Turning to the other, slightly more involved regime, let us first describe the example which Frankl and F\"uredi conjectured is optimal in this case as it will provide some motivation for our actual construction. Let us start with a $(t,(r-t)k+t-1,n)$ Steiner system $S$, defined as an $((r-t)k+t-1)$-uniform $n$-vertex hypergraph in which every $t$ element subset of the vertices belongs to precisely one edge. That such objects exist, for fixed values of $t,r,k$, and infinitely many values of $n$ follows from a recent remarkable breakthrough result of Keevash \cite{designs}, although for our purposes a much older result of R\"odl \cite{Rodl85} which says that such objects in which only $1-o(1)$ proportion of $t$-element subsets belong to an edge would be sufficient. To get the example of Frankl and F\"uredi, one now takes $H$ to be the $r$-uniform hypergraph obtained by choosing for every edge of $S$ all its $r$ element subsets to be edges of $H$. It is not hard to check that such an $H$ has no $\S{r}{t}{k}$ and that it indeed has $\Omega(n^{t}k^{r-t})$ edges. This example establishes the lower bound in our problem as well provided $k$ is also a constant, however once one allows $k$ to grow with $n$ the above approach stops to work and, in particular, the required Steiner systems can be seen not to even exist if $k$ is large enough in terms of $n$. The approximate approach of \cite{Rodl85} allows $k$ to grow with $n$ but only very slowly. Our new contribution here is to identify in some sense the correct, easier to ensure property, even when $k$ is linear in $n$, which still suffices to prevent the existence of $\S{r}{t}{k}$.

\begin{lem}\label{lm:lower-bound-big-centre}
Let $2t \ge r > t \ge 0$ and $k \le n/r^2,$ then
$$\ex(n,\S{r}{t}{k}) \ge \Omega(n^{t}k^{r-t}).$$
\end{lem}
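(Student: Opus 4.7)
The task is to construct, for every sufficiently large $n$ and every $k\le n/r^2$, an $r$-uniform $n$-vertex hypergraph $H$ with $|H|=\Omega(n^t k^{r-t})$ and no copy of $\S{r}{t}{k}$. As throughout the paper, a copy of $\S{r}{t}{k}$ with kernel $K$ corresponds precisely to a matching of size $k$ in the link hypergraph $L_K$, so the task reduces to producing an $H$ such that the matching number $\nu(L_K)\le k-1$ for every $t$-subset $K$. A convenient sufficient condition is that all edges of $H$ through $K$ lie inside a ``container'' $K\cup C_K$ with $|C_K|\le (r-t)(k-1)$, which automatically gives $L_K\subseteq \binom{C_K}{r-t}$ and hence $\nu(L_K)\le k-1$.

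I plan to build $H$ through a block-design construction. Set $b:=(r-t)(k-1)+t$ and aim to produce a family $\mathcal B=\{B_1,\dots,B_m\}\subseteq \binom{[n]}{b}$ with the \emph{partial Steiner property}: every $t$-subset of $[n]$ is contained in at most one block. Taking $H:=\bigcup_i\binom{B_i}{r}$ yields exactly the container structure above, since the link of every $t$-subset is confined to at most one $B_i\setminus K$ of size $b-t=(r-t)(k-1)$. A routine count shows $|H|\asymp m\binom{b}{r}\asymp m\cdot k^r$, so with $m\asymp \binom{n}{t}/\binom{b}{t}\asymp (n/k)^t$ blocks one achieves the target $\Omega(n^t k^{r-t})$ edges. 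The ``correct, easier-to-ensure property'' identified in the paper is essentially this partial Steiner condition, which is a meaningful relaxation of the exact Steiner systems needed in the Frankl--F\"uredi construction since it avoids the divisibility/existence constraints forced by exactness.

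The main obstacle is then producing such a family of blocks across all admissible regimes of $k$. For $k$ sub-linear in $n$, so $b=o(n)$, a standard R\"odl-type nibble or greedy argument yields a partial Steiner family with the required number of blocks. For $k$ linear in $n$ one cannot fit many almost-disjoint linear-sized blocks at all; however, the hypothesis $k\le n/r^2$ gives $rb\le n$, so even a handful of disjoint blocks of size $b$ fit into $[n]$, and each already contributes $\binom{b}{r}=\Omega(k^r)=\Omega(n^r)=\Omega(n^t k^{r-t})$ edges, where the last equality uses that $k$ is of order $n$. The hardest technical point will be unifying these regimes into a single clean construction and verifying simultaneously both the partial Steiner property and the edge-count lower bound; I expect this to be handled by a randomized choice of $\mathcal B$ followed by a deletion step that enforces the partial Steiner condition while preserving a constant fraction of the expected edges, together with a concentration argument that bounds the typical overlaps of random $b$-sized blocks at $t$-subsets.
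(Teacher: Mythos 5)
Your reduction to a covering condition on every link is correct, and your intuition that the paper identifies a condition weaker than the Frankl--F\"uredi exact Steiner systems is also correct, but the specific construction you commit to --- a partial Steiner family $\mathcal{B}$ of $\Theta((n/k)^t)$ blocks of size $b=(r-t)(k-1)+t$ --- has a genuine gap that the paper's construction deliberately avoids.

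The problem is that the partial Steiner property (no $t$-subset in two blocks) is both stronger than necessary and hard to achieve when $b$ grows with $n$. Once $k\gtrsim\sqrt{n}$, two random $b$-sets have expected intersection $b^2/n\gtrsim t$ and therefore almost surely share a $t$-subset, so your proposed ``random choice followed by a deletion step'' cannot enforce partial Steiner without deleting essentially all blocks. A greedy argument fares no better: after $m$ blocks there are $\sim m\binom{b}{t}$ forbidden $t$-sets, and a random new block hits one of them with probability $\sim m\binom{b}{t}^2/\binom{n}{t}$, which caps $m$ at $O(n^t/b^{2t})$ --- a factor $b^t$ short of the target $(n/b)^t$. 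The R\"odl nibble would close this gap for fixed $b$, but is not a black box for $b$ growing polynomially in $n$, which is exactly the regime you need, and you do not supply an argument there. The paper sidesteps all of this: it drops the requirement that the blocks $V_1,V_2,\dots$ (of size $s=(r-t)k+t-1$) be pairwise $t$-intersection-free, and instead filters \emph{edges, asymmetrically}: $S_{i+1}$ consists of those $r$-tuples of $V_{i+1}$ with at most $t-1$ vertices in each \emph{earlier} $V_j$. This weaker, one-sided, edge-level condition still guarantees that any two edges from distinct stages meet in fewer than $t$ vertices (because the later edge has at most $t-1$ vertices in the earlier block, and the earlier edge lies entirely in that block), a simple union bound shows a constant fraction of $r$-tuples survive in each stage, and $\Theta((n/k)^t)$ stages suffice for all $k\le n/r^2$. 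In short: right intuition about containers and going beyond exact designs, but the construction you propose requires a design-theoretic object you cannot build in the key range of $k$, whereas the paper replaces that object with a weaker condition that a one-pass probabilistic argument handles directly.
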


\begin{proof}
We will choose the edges of our desired hypergraph on a vertex set $V$ of size $n$ in multiple stages. At stage $i$ we first choose a subset of at most $s=(r-t)k+t-1$ vertices $V_i$ and then a collection of edges $S_i$ only using vertices from $V_i$ and add them to the edge set of $H$. We will do so in a way that ensures that no two edges from different stages intersect in $t$ or more vertices. This ensures that any copy of $\S{r}{t}{k}$ we may find must be using edges of only a single $S_i$, which is impossible since the number of vertices in a copy of $\S{r}{t}{k}$ is $(r-t)k+t>|V_i|$. 

Let us assume we have already chosen $V_1,\ldots, V_i$ and $S_1,\ldots, S_i$ for some $0 \le i \le \Omega((n/k)^t)$.
Let us choose $v_1,\ldots, v_{s}$ independently and uniformly at random from $V$. Observe first that the probability that $v_1,\ldots,v_r$ are all distinct is at least $1-\binom{r}{2}\cdot \frac{1}{n} \ge \frac{1}{2}$. Second, conditioning on $v_1, \dots, v_r$ being all distinct, observe that the probability that $t$ vertices among them were chosen inside of some $V_j$ with $j\le i$ is by a union bound at most 
$$i \cdot \binom{r}{t} \cdot \left(\frac{(r-t)k+t-1}{n}\right)^t \le \frac{1}{2}.$$
This means that with probability at least $1/4$ the vertices $v_1,\ldots, v_r$ are all distinct and at most $t-1$ of them belong to the same $V_j,$ for any $j\le i$. The same holds for any $r$-tuple of our random vertices of which there are $\binom{(r-t)k+t-1}{r}$. In particular, there is an outcome in which at least $\frac{1}{4}\cdot \binom{(r-t)k+t-1}{r} \ge \Omega(k^r)$ of the $r$-tuples satisfy these two properties. We set $V_{i+1}=\{v_1,\ldots, v_s\}$ and we let $S_{i+1}$ consist of all such $r$-tuples. In particular, $|S_{i+1}| \ge \Omega(k^r)$ and no edge in $S_{i+1}$ contains more than $t-1$ vertices in any $V_{j}$ with $j\le i$. The latter condition ensures, as we described above, that there is no $\S{r}{t}{k},$ and the former combined with the fact we may perform $\Omega((n/k)^t)$ many stages that we in total construct at least $\Omega(n^tk^{r-t})$ edges, as desired. 
\end{proof}

\section{Concluding remarks}
\vspace{-0.35cm}

In this paper we determined the correct order of magnitude of the Tur\'an number of sunflowers of any uniformity, in terms of both the size of the underlying graph and the size of the sunflower.
In the process we link this problem to a number of interesting topics from a variety of areas including  optimisation theory, theoretical computer science and design theory.

Frankl and F\"uredi determined the correct dependence of $\ex(n,\S{r}{t}{k})$ on $n$, our result generalises this and provides the correct dependence on both $n$ and $k$ so a natural next question would be to, in addition, determine the correct dependence on $r$. Here the first step which already might hold some interest is to determine what the answer is in the simplest case $t=1$, where for example we know the answer up to lower order terms for $r=3$, thanks to Chung \cite{chung1983unavoidableS}. 

As mentioned in the introduction one of the motivations of Chung and Erd\H{o}s (see e.g.\ \cite{chung1983unavoidable,chung1987unavoidable}) for raising the question of what happens if one allows $k$ to grow with $n$ is the relevance of this problem to the so-called unavoidability problem. It asks, across all $r$-uniform hypergraphs $H$ with a fixed number of edges what is the smallest $n$-vertex Tur\'an number of $H$. The answer is now known, up to a constant factor, in the graph case as well as in the $3$ and $4$ uniform cases. In fact Chung and Erd\H{o}s say that one of the main obstructions to the solution of this problem lies in the fact that $\ex(n,\S{r}{t}{k})$ is not that well understood. The reason behind this is the fact that in all known cases sunflowers, and in particular ones with size growing with $n$, provide the basic building blocks for the actual optimal unavoidable hypergraphs. As such, our result completely resolves this issue and makes an essential first step towards a resolution of the unavoidability problem for any uniformity.

Sunflowers are perhaps the most natural examples of sparse hypergraphs. This leads to the question of whether results similar to ours can be obtained for other natural families of sparse hypergraphs. In particular, a specific family called generalised stars was suggested in \cite{unavoidability} based on the fact that the actual optimal unavoidable hypergraphs, at least in the currently known cases, always turn out to be generalised stars. The $r$-uniform generalised star $\mathcal{S}t_r(d_1,\ldots,d_{r})$ is defined recursively to consist of $d_1$ vertex disjoint copies of the hypergraph consisting of a ``root'' vertex whose link graph is equal to $\mathcal{S}t_{r-1}(d_2,\ldots,d_{r})$. So for example $\S{r}{t}{k}=\mathcal{S}t_r(1,\ldots,1,k,1,\ldots, 1)$ where $k$ appears as the $t$-th entry. Apart from being interesting in its own right, understanding the $n$-vertex Tur\'an number of generalised stars, where $d_i$ is allowed to grow with $n$ seems to be the next essential step towards the proof of the unavoidability conjecture of Chung and Erd\H{o}s.
 

Finally, another potentially interesting further direction might be to understand what happens if we exclude multiple sunflowers simultaneously. In particular, what is the maximum number of edges in an $r$-uniform hypergraph which contains no $\S{r}{t}{k}$ for any $t$ in some given subset $T \subseteq [r-1]$. Our result resolves the case when $T$ is a single element subset. Some further examples include taking $T=\{0,1,\ldots,r-1\}$ in which case we find ourselves in the setting of the classical Erd\H{o}s-Rado sunflower conjecture discussed in the introduction.
In fact, understanding the answer to this question for $T=\{0,1\ldots, t-1\}$ has been raised as one of the initial problems in the proposal by Kalai for the Polymath project \cite{Polymath} surrounding this conjecture. 
Another natural instance is when $T=\{\ell,\ell+1,\ldots, r-1\}$, which in some sense translates to $\ell$-element subsets having a bounded codegree. We note that in all these examples determining the correct dependency on $k$ is fairly easy and the main question is the correct dependency on $r$. However, as evidenced by our result on the single element set case, even the correct dependency on $k$ for an arbitrary set $T$ seems to be far from immediate.

\providecommand{\bysame}{\leavevmode\hbox to3em{\hrulefill}\thinspace}
\providecommand{\MR}[1]{}
\providecommand{\MRhref}[2]{%
  \href{http://www.ams.org/mathscinet-getitem?mr=#1}{#2}
}
\providecommand{\href}[2]{#2}

\vspace{-0.5cm}

\end{document}